\numberwithin{equation}{section}
\renewcommand\d{\partial}
\renewcommand\o{\omega}
\newcommand\s{\sigma}
\newcommand\R{\mathbb R}\newcommand\N{\mathbb N}\newcommand\Z{\mathbb Z}
\def\de{\delta}
\def\OO{\Omega}
\def\epsilon{\varepsilon}
\def\e{\varepsilon}
\def\eps{\varepsilon}
\newcommand\br{\begin{rem}}
\newcommand\er{\end{rem}}
\newcommand\bp{\begin{pmatrix}}
\newcommand\ep{\end{pmatrix}}
\newcommand\be{\begin{equation}}
\newcommand\ee{\end{equation}}
\newcommand\ba{\begin{equation}\begin{aligned}}
\newcommand\ea{\end{aligned}\end{equation}}
\newcommand\nn{\nonumber}
\newcommand{\calF}{\mathcal{F}}
\newcommand{\supp}{{\rm supp }}
\newcommand{\uu}{{\mathbf u}}
\newcommand{\vv}{{\mathbf v}}
\newcommand{\vu}{\vc{u}}
\newcommand{\vg}{\vc{g}}
\newcommand{\vc}[1]{{\bf #1}}
\newcommand{\dx}{{\rm d} {x}}
\newcommand{\dive}{{\rm div\,}}
\newtheorem{defi}{Definition}[section]
\newtheorem{theorem}[defi]{Theorem}
\newtheorem{proposition}[defi]{Proposition}
\newtheorem{lemma}[defi]{Lemma}
\newtheorem{remark}[defi]{Remark}
\newtheorem{ass}[defi]{Assumption}
\numberwithin{equation}{section}
\begin{document}

\title{Homogenization of Poisson and Stokes equations in the whole space}

\author{Yong Lu \footnote{Department of Mathematics, Nanjing University, 22 Hankou Road, Gulou District, 210093 Nanjing, China. Email: luyong@nju.edu.cn. The work of the author has been supported by the Recruitment Program of Global Experts of China. This work is partially supported by project ANR JCJC BORDS funded by l'ANR of France.} }

\date{}

\maketitle

\begin{abstract}

We consider the homogenization of the Poisson and the Stokes equations in the whole space perforated with periodically distributed small holes. The periodic homogenization in bounded domains is well understood, following the classical results in \cite{Tartar80, Cioranescu-Murat, All-NS1,All-NS2}. In this paper, we show that these classical homogenization results in a bounded domain can be extended to the whole space $\R^d$. Our results cover all three cases corresponding to different sizes of holes and cover all $d\geq 2$.

\end{abstract}

{\bf Keywords}: Poisson and Stokes equations; homogenization; perforated domains; whole space.

%{\bf MSC Code}: 35B27, 35Q35, 76S05.

 %\tableofcontents

%\renewcommand{\refname}{References}

%%%%%%%%%%%%%%%%%%%%%%%%%%%%%%%%%%%%%%%%%%%%%%%%%%%%%%%%%%%%%%%%%%%%%%%%%%%%%%%%%%%%%%%%%%

\section{Introduction}

The homogenization of the Poisson and the Stokes equations in a bounded domain perforated with a large number of small holes has been systematically studied in many literatures following the classical papers \cite{Cioranescu-Murat} for the Poisson equation and \cite{Tartar80, All-NS1, All-NS2} for the Stokes equations.

The perforated domain under consideration is described as following. Let $\OO\subset \R^d, \ d\geq 2$ be an open domain and let $\e, a_\e$ be small parameters satisfying $0< a_{\e} \leq \e \leq 1.$ The holes are denoted by $T_{\e,k}$ which are assumed to satisfy
\be\label{def-holes1}
B(\e x_k, \de_1 a_{\e}) \subset \subset T_{\e, k}  = \e x_k + a_{\e} T  \subset  \subset B(\e x_k, \de_2\e)   \subset \subset \e Q_k,
\ee
where the cube $Q_k : = (-\frac{1}{2},\frac{1}{2})^d + k$ and $x_k = x_0 + k$ with $x_0 \in Q_0 =  (-\frac{1}{2},\frac{1}{2})^d$, for each $k\in \Z^d$; $T$ is the {\em model hole} which is assumed to be closed, bounded, and simply connected, with $C^{1}$ boundary; $\de_{i}, \, i=1,2$ are fixed positive numbers.  The perforation parameters $\e$ and $a_{\e}$ are used to measure the mutual distance of the holes and the size of the holes, respectively,  and $\e x_{k} = \e x_{0} + \e k$ determine the locations of the holes. The perforated domain $\OO_\e$ is then defined as
\be\label{def-Oe}
\OO_\eps:=\OO \setminus  \bigcup_{k\in \Z^d} T_{\e,k}. \nn%\quad \mbox{where} \ K_\e:=\{k\in \Z^{d} \ : \ \e \overline {Q}_k\subset \OO\}.
\ee

In this paper, we consider the following Dirichlet problems of the Poisson and the Stokes equations in $\OO_\e$:
\be\label{Poisson-Oe}
\left\{\begin{aligned}
-\Delta u_\e &= f,\quad &&\mbox{in}~\OO_\e,\\
u_\e & = 0,\quad &&\mbox{on} ~\d\OO_\e,
\end{aligned}\right.
\ee
\be\label{Stokes-Oe}
\left\{\begin{aligned}
-\Delta \vv_\e +\nabla p_\e &= \vg,\quad &&\mbox{in}~\OO_\e,\\
\dive \vv_\e &=0 ,\quad &&\mbox{in}~\OO_\e,\\
\vv_\e & = 0,\quad &&\mbox{on} ~\d\OO_\e.
\end{aligned}\right.
\ee

\medskip

Cioranescu and Murat \cite{Cioranescu-Murat} considered \eqref{Poisson-Oe} and Allaire \cite{All-NS1, All-NS2} considered \eqref{Stokes-Oe}, where $\OO$ is assumed to be a bounded domain with smooth boundary (for example a bounded $C^1$ domain), and the external forces $f\in L^2(\OO),  \ \vg\in L^2(\OO;\R^d)$. In their studies, instead of giving specific assumptions on the holes configurations as in \eqref{def-holes1}, some abstract framework of hypotheses is imposed. It was shown that when the number of holes goes to infinity and the size of the holes goes to zero simultaneously, the solution approaches an effective state governed by certain homogenized equations which are defined in homogeneous domains --- domains without holes. The homogenized equations are crucially determined by the ratio between the size of the holes and the mutual distance between the holes. Precisely, Allaire \cite{All-NS1, All-NS2} showed that the homogenized equations for \eqref{Stokes-Oe} are determined by the ratio $\s_{\e}$ defined as following:
\be\label{ratio}
\s_{\e}: =  \big(\frac{\e^{d}}{a_{\e}^{d-2}}\big)^{\frac{1}{2}} \quad  \mbox{if} \  d\geq 3;\qquad
\s_{\e} : =  \e\left|\log \frac{a_{\e}}{\e}\right|^{\frac 12} \quad   \mbox{if} \  d = 2.
\ee
If $\lim_{\e\to 0} \s_{\e} = 0$ corresponding to the supercritical case of {\em large holes}, the homogenized system is the Darcy's law; if $\lim_{\e\to 0} \s_{\e}  = \infty$ corresponding to the subcritical case of {\em small holes}, the limit system remains to be the same Stokes equations; if $\lim_{\e\to 0} \s_{\e}  = \s_{*} \in (0,+\infty)$ corresponding to the case of \emph{critical size of holes}, the homogenized equations are governed by the Stokes-Brinkman equations --- a combination of the Darcy's law and the original Stokes equations. %In the earlier paper \cite{Cioranescu-Murat} for the Poisson equation,  only the critical case is considered and the Brinkman type equation related to the Poisson equation is derived. The method in \cite{Cioranescu-Murat} can also be applied to the other two cases.

\medskip

The homogenization studies in \cite{Cioranescu-Murat} and \cite{All-NS1,All-NS2} are extended in different perspectives. More complicated models in fluid mechanics are considered, see \cite{Mik}, \cite{Mas-Hom}, \cite{FNT-Hom}, \cite{FL1, DFL, Lu-Schwarz18}, and the references therein. By employing the idea of cell problem introduced by Tartar \cite{Tartar80}, a new unified proof covering different sizes of holes is given in \cite{Jin19} and \cite{Lu-Unify} for the homogenization of the Poisson equation and the Stokes system, respectively. Another direction of research is to consider more general holes configurations without periodicity, see \cite{DGR2008, FeNaNe, Hillairet1,CH18, GHV18, GH20}. Without periodicity, Hillairet \cite{Hillairet1} considered the Stokes problem with nonzero boundary values on the holes, where the modelling goes back to \cite{DGR2008}. In \cite{DGR2008} and \cite{Hillairet1}, the minimal distance between the holes is assume to be much larger than the size of the holes. Very recently in \cite{CH18, GHV18, GH20}, the random homogenization of the Poisson equation and the Stokes equations is studied.  Particularly in \cite{GHV18, GH20}, randomly distributed spherical holes are considered, where the centers of the holes are distributed according to a Poisson point process. They imposed very week assumptions on the holes configurations, where the holes are allowed to be very close or even overlap. For the random homogenization study in \cite{CH18}, the overlap of holes is negligible in probability. In \cite{FeNaNe, Hillairet1,CH18, GHV18, GH20}, the critical size of holes is considered in a bounded domain in $\R^d$ with $d\geq 3$ and the Brinkman type equations are derived.

\medskip

So far, most of the results are obtained for bounded domains. Recently in \cite{HV2018}, along with others, H\"ofer and Velazquez studied the homogenization of the Poisson and the Stokes problems in the whole space in three and higher dimensional setting. They employed the reflection method and derived the Brinkman type equations in the limit. In \cite{HV2018},  a new abstract framework in functional analysis was built to describe the homogenization problems, and show the connection between the method of reflections and such abstract framework. Unlike the holes configurations \eqref{def-holes1} considered in this paper, no periodicity is assumed in \cite{HV2018}. Instead, some general assumptions are imposed on the holes, such as the size, the minimal distance, the upper bound of the total capacity, the convergence and the lower bound of the average capacity. See Conditions 1.1 and 1.2, Assumption 1.7 in \cite{HV2018}. The analysis in \cite{HV2018} relies on the notation of {\em screening length}, which goes back to \cite{NO01, NV06}. In \cite{NV04, NV06}, using the {\em screening estimate}, the homogenization in unbounded domains for the Poisson equation is also studied.

In \cite{HV2018}, only the critical case $\lim_{\e\to 0} \s_{\e}  = \s_{*} \in (0,+\infty)$ is included, and it seems their method is not compatible to the other two noncritical cases. Indeed, Condition 1.1 in \cite{HV2018} which ensures the boundedness of the total capacity of the holes is not satisfied for the supercritical case of large holes; Assumption 1.7  in \cite{HV2018} which ensures the positivity of  the lower bound of the average capacity is satisfied only if the ratio $\s_\e$ is bounded as $\e\to 0$. This is not satisfied for the subcritical case of small holes. The analysis in \cite{HV2018} does not work for two dimensional case. One main issue is that in two dimensions, the decay of the Green function for the Laplace or the Stokes operator is weaker and this makes it more difficult to bound the interaction between holes. Such bounds are needed for the method of reflections used in \cite{HV2018}. Another issue is on the characterization of the homogeneous Sobolev spaces $D_0^{1,2}(\R^2)$ when $d=2$. See Remarks \ref{rem-2d-1}. In this paper, we use different method and we will cover all the three cases (critical, supercritical, and subcritical) and all $d\geq 2$.

\section{Main results}

In this section, we state our main homogenization results in the whole space $\OO= \R^d$.  We first introduce some function spaces and related properties.

\subsection{Some function spaces}\label{sec:homo-sobolev}

Let $E$ be a locally Lipschitz domain in $\R^d$. For any $1\leq q\leq \infty$ and $m\in \Z_+$,  $W^{m,q}(E)$ denotes the classical Sobolev space, and $W_0^{m,q}(E)$ denotes the completion of $C_c^\infty(E)$ in $W^{m,q}(E)$. Here $C_c^\infty(E)$ is the space of smooth functions with compact support. We use $W^{-1,q}(E)$ to denote the dual space of $W_0^{1,q}(E)$. For $1\leq q <\infty$, $W^{1,q}(\R^d) = W^{1,q}_0(\R^d)$.

 If the functions are vector valued in $\R^n$, we use the notations $W^{m,q}(E;\R^n),  \ W^{m,q}_0(E;\R^n)$, $\ C_c^\infty(E;\R^n)$, and so on. Let $C_{c,div}^\infty(E;\R^d)$ be the space of divergence free functions in $C_c^\infty(E;\R^d)$. We use $\left<\cdot,\cdot\right>_{X',X}$ to denote the dual pair between a Banach space $X$ and its dual space $X'$. We often omit the subscript and simply write $\left<\cdot,\cdot\right>$ if it is clear from the context.

We now recall some concepts of the homogeneous Sobolev spaces. The materials are mainly taken from Chapter II.6 and II.7 of Galdi's book \cite{Galdi-book}. Let $1\leq q < \infty$. We define the linear space
\be\label{def-D1q}
D^{1,q}(E)=\{u\in L_{loc}^1(E)\,:\, \|\nabla u\|_{L^q(E)} <\infty \},\quad | u |_{D^{1,q}(E)}:=\|\nabla u\|_{L^q(E)}.
\ee
The space $D^{1,q}$ is generally not a Banach space. After introducing the equivalent classes
$$
[u] = \{u +c,\ c \in \R \ \mbox{is a constant}\},\quad \mbox{for any $u\in D^{1,q}(E)$},
$$
the space $\dot D^{1,q}(E)$ of all equivalence classes $[u]$  equipped with the norm
$$
\left\|[u]\right\|_{\dot D^{1,q}(E)} := |u|_{D^{1,q}(E)} =\|\nabla u\|_{L^q(E)}
$$
is a Banach space.

The semi-norm $|\cdot |_{D^{1,q}(E)}$ introduced in \eqref{def-D1q} defines a norm in $C_c^\infty(E)$. We introduce the Banach space $D_0^{1,q}(E)$ which is the completion of $C_c^\infty(E)$ with respect to the norm $|\cdot |_{D^{1,q}(E)}$. We denote by $D^{-1,q}(E)$ the dual space of $D_0^{1,q}(E)$.

For any open set $E$ in $\R^d$, there holds the following Gagliardo-Nirenberg-Sobolev inequality:  for each $1\leq q <d$, there exists a constant $C$ depending only on $q$ and $d$ such that for all $u\in C_c^{\infty}(E)$, there holds
\be\label{GNS-ineq}
\| u \|_{L^{q^*}(E)} \leq C(q,d) \| \nabla u \|_{L^q(E)}, \ \  \frac{1}{q^*} = \frac{1}{p} - \frac{1}{d}.
\ee
By density argument, the same inequality \eqref{GNS-ineq} holds for all $u\in D_0^{1,q}(E)$ with $1\leq q<d$. This means $D_0^{1,q}(E)$ is continuously embedded into $L^{q*}(E)$ if $1\leq q <d$. Moreover, if $1\leq q<d$, Galdi \cite[equation (II.7.14)]{Galdi-book} gave an equivalent characterization of $D_0^{1,q}(E)$:
\be\label{def-tD1q}
 D_0^{1,q}(E)=\big\{ u\in D^{1,q}(E): \ u\in L^{q*}(E)\  \mbox{such that $\psi u \in W_0^{1,q}(E)$ for any $\psi \in C_c^\infty (\R^d)$}\big\},\nn
\ee
with the equivalent norm
$$
\|\cdot \|_{D_0^{1,q}(E)}:= | \cdot |_{D^{1,q}(E)}+ \|\cdot \|_{L^{q*}(E)}.
$$

It becomes more tricky if $q\geq d$. Particularly if $q\geq d$ and $E=\R^d$, there holds (see (II.7.16) of \cite{Galdi-book})
 $$D_0^{1,q}(\R^d) =D^{1,q}(\R^d)  = \{[u] = u+c : \nabla u \in L^q(\R^d), \  \mbox{$c$ is a constant}\}.$$

\subsection{Homogenization results}

We first give our assumptions on the source functions.
\begin{ass}\label{ass-g} Let $d\geq 2$.
\begin{itemize}
\item[(i)] For the critical case $\lim_{\e\to 0} \s_{\e} = \s_{*} \in (0,+\infty)$, we assume $f\in W^{-1,2}(\R^d)$ and $\vg \in W^{-1,2}(\R^d;\R^d)$.

\item[(ii)] For the supercritical case $\lim_{\e\to 0} \s_{\e} = 0$, we assume $f\in L^2(\R^d)$ and $\vg \in L^2(\R^d;\R^d)$.

\item[(iii)] For the subcritical case $\lim_{\e\to 0} \s_{\e} = \infty$, we assume $f\in D^{-1,2}(\R^d)$ and $\vg \in D^{-1,2}(\R^d;\R^d)$.

\end{itemize}

\end{ass}

In \cite{Cioranescu-Murat}, \cite{All-NS1, All-NS2}, and many other literatures, the source functions are assumed to be in $L^2(\OO)$, which is a subspace of $W^{-1,2}(\OO)$ or $D^{-1,2}(\R^d)$ when $\OO$ is bounded. Actually this choice can be relaxed for the critical and subcritical cases, where $W^{-1,2}(\OO)$ source functions will be good. In bounded domains, the classical Poincar\'e inequality can always be applied. But we loose the uniformness of the Poincar\'e type inequality for the subcritical case in the whole space. We need better source functions in $D^{-1,2}(\R^d)$ for this case. We do not need more restrictions for the other two cases compared to the study in a bounded domain. We give a remark on  Assumption \ref{ass-g} (iii):
\begin{remark}\label{rem-2d-1} A sufficient condition for Assumption \ref{ass-g} (iii) is the following:
 \be\label{f-ass}
f \in L^{\frac{2d}{d+2}}(\R^d) \ \mbox{if $d\geq 3$}; \quad \mbox{$f\in L^2(\R^d)$, $f$ is compactly supported and $\int f = 0$ if $d=2$.}\nn
\ee

If $d\geq 3$, the number $\frac{2d}{d+2}$ is actually the Lebesgue conjugate number of the component $2^*$ from the Gagliardo-Nirenberg-Sobolev inequality \eqref{GNS-ineq}. This ensures $L^{\frac{2d}{d+2}}(\R^d)$ is continuously embedded into $D^{-1,2}(\R^d)$.

The 2d case is more tricky. In this case  $D_0^{1,2}(\R^2) = D^{1,2}(\R^2) $ the functions in which can only be defined up to an addition of some constant. To ensure $f\in D^{-1,2}(\R^2)$, necessarily $\left<f,1\right> = 0$ which is equivalent to $\int f = 0$  if $f$ is integrable.  If $f\in L^2_0(\R^d)$ (the subscript $0$ means zero average) and $f$ is compactly supported, then applying the Poincar\'e's inequality
$$
\|u - \left< u\right>_{\supp f}\|_{L^2(\supp f)} \leq C(\supp f) \|\nabla u\|_{L^2(\supp f)}
$$
implies
$$
\int_{\R^d} f u\,\dx  =  \int_{\supp f} f (u-\left< u\right>_{\supp f})\,\dx \leq  C(\supp f) \|f\|_{L^2} \|\nabla u\|_{L^2(\supp f)}.
$$
This means $f\in D^{-1,2}(\R^d)$. Here $\left< u\right>_{\supp f}$ denotes the average of $u$ in $\supp f$. We remark that the constant $C(\supp f)$ depends on the size of $\supp f$.

\end{remark}

\begin{remark} To ensure the well-posedness of the Poisson problem \eqref{Poisson-Oe} and the Stokes problem \eqref{Stokes-Oe}, weaker assumptions $f\in W^{-1,2}(\R^d), \ \vg \in W^{-1,2}(\R^d;\R^d)$ will be sufficient for all three cases and for all $d\geq 2.$ See Proposition \ref{prop:Sto1}.

\end{remark}

\medskip

 Before stating the theorems, we introduce the following convention: for each $u \in W_0^{1,2}(\OO_\e)$, we will naturally treat $u$ as a function in $W_0^{1,2}(\R^d)$ by imposing
\ba\label{def-extension-u}
 u =  0  \ \mbox{on}\ \R^d \setminus \OO_\e =  \bigcup_{k\in \Z^d} T_{\e,k}.\nn
\ea

\medskip

For the Poisson problem \eqref{Poisson-Oe}, we have the following result where the limits are taken up to possible extractions of subsequences.
\begin{theorem}\label{thm:Lap}
Let $\OO = \R^d, \ d\geq 2$. Let $f$ satisfy Assumption \ref{ass-g}. Then for each fixed $\e\in (0,1)$, the Poisson problem \eqref{Poisson-Oe} admits a unique weak solution $u_\e \in W_0^{1,2}(\OO_\e)$. Moreover, we have the following description of the limit system related to different sizes of holes:
\begin{itemize}

\item[(i)] If $\lim_{\e\to 0} \s_{\e} = 0$ corresponding to the case of large holes, we have
$$
{\s_{\e}^{-2}}{u_{\e}} \to u \ \mbox{weakly in} \ L^{2}(\R^d),
$$
where $u$ satisfies
\be\label{Lap-Darcy}
u = \bar w f. \nn
\ee

\item[(ii)] If $\lim_{\e\to 0} \s_{\e} = \s_{*} \in (0,+\infty)$ corresponding to the case of critical size of holes, we have
$$
 u_{\e} \to u \ \mbox{weakly in} \ W^{1,2}_{0}(\R^{d}),
$$
 where $u$ solves the Laplace-Brinkman equation:
 \be\label{Lap-Brinkman}
-\Delta u +  \s_{*}^{-2} \bar w^{-1} u = f,\quad \mbox{in}~\R^d. \nn
\ee

\item[(iii)] If $\lim_{\e\to 0} \s_{\e} = \infty$ corresponding to the case of small holes, we have
$$
u_{\e} \to u \ \mbox{strongly in} \ D_0^{1,2}(\R^d),
$$
where $u$ satisfies the Poisson equation
 \be\label{Lap-Lap}
-\Delta u = f,\quad \mbox{in}~\R^d. \nn
\ee

\end{itemize}

Here in cases (i) and (ii), $\bar w$ is a positive constant solely determined by the model hole $T$ and is given in \eqref{w-q-e-lim-lap}.

\end{theorem}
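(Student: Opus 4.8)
The plan is to follow the now-standard "cell problem / oscillating test function" strategy of Tartar and Cioranescu--Murat, adapted to the whole space, treating well-posedness first and then the three regimes.

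\medskip

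\textbf{Well-posedness.} For fixed $\e\in(0,1)$ the domain $\OO_\e$ is a locally Lipschitz open subset of $\R^d$ and $W_0^{1,2}(\OO_\e)$ is a Hilbert space on which $\|\nabla\cdot\|_{L^2}$ is a norm: indeed the holes $T_{\e,k}$ have positive capacity and are distributed with mutual distance $\e$, so a uniform Poincar\'e inequality holds on each periodicity cell and hence on $\OO_\e$ (this is where we use, for a \emph{fixed} $\e$, that balls of radius $\de_1 a_\e$ are removed from each $\e Q_k$). Since $f\in W^{-1,2}(\R^d)\hookrightarrow W^{-1,2}(\OO_\e) = (W_0^{1,2}(\OO_\e))'$ in all three cases (Assumption \ref{ass-g} (ii) and (iii) both embed into $W^{-1,2}$ on any fixed-scale perforated domain, using the cell Poincar\'e inequality), Lax--Milgram gives the unique weak solution $u_\e$. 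Extending $u_\e$ by zero, the energy identity $\|\nabla u_\e\|_{L^2(\R^d)}^2 = \langle f, u_\e\rangle$ yields the basic a priori bound, which I will sharpen below to track the correct power of $\s_\e$.

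\medskip

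\textbf{The cell/corrector function.} The constant $\bar w$ and the corrector come from the rescaled exterior problem for the model hole $T$: let $w$ solve $-\Delta w = 1$ outside $T$ (when $d\ge 3$) with $w=0$ on $\d T$ and $w$ decaying, respectively the logarithmic analogue when $d=2$; then one builds, on each cell $\e Q_k$, a function $w_\e$ equal to a suitable rescaling of $w$ near $T_{\e,k}$ and matched to the constant $\s_\e^2$ (up to normalization) away from the hole, so that $w_\e\to \bar w$ appropriately and $-\Delta w_\e = \s_\e^{-2}\,(\text{something}\to 1)$ in a measure sense, while $w_\e = 0$ on the holes. The precise properties — $w_\e$ bounded, $\nabla w_\e\to 0$ in the right space, and the key distributional identity for $-\Delta w_\e$ with a negligible remainder supported in thin annuli — are exactly the whole-space versions of the estimates in \cite{Cioranescu-Murat, Jin19, Lu-Unify}; the value of $\bar w$ referenced in \eqref{w-q-e-lim-lap} is the limit of the cell average of $w_\e/\s_\e^2$ (or its 2d counterpart).

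\medskip

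\textbf{Passing to the limit in the three regimes.} In each case I test \eqref{Poisson-Oe} against $w_\e\vp$ for $\vp\in C_c^\infty(\R^d)$ and against $u_\e$ itself, and integrate by parts, using $u_\e=w_\e=0$ on the holes so no boundary terms on $\d\OO_\e$ survive. (i) \emph{Supercritical} ($\s_\e\to 0$): the sharp a priori estimate gives $\|\s_\e^{-2}u_\e\|_{L^2}$ bounded (via the GNS/Poincar\'e structure of the cell problem), one extracts a weak limit $u$ in $L^2$, and the two test-function computations identify $u=\bar w f$; the strange-term/absorption mechanism forces the Laplacian contribution to drop. (ii) \emph{Critical} ($\s_\e\to\s_*$): $\|\nabla u_\e\|_{L^2(\R^d)}$ is bounded uniformly by the energy identity and $f\in W^{-1,2}$, so $u_\e\rightharpoonup u$ in $W_0^{1,2}(\R^d)=D_0^{1,2}(\R^d)$ (here $d\ge2$, $q=2$, so we are in the $q\ge d$ regime when $d=2$ and must work with equivalence classes as recalled after \eqref{def-tD1q}); the Tartar argument with $w_\e\vp$ produces the extra zeroth-order term $\s_*^{-2}\bar w^{-1}u$, giving \eqref{Lap-Brinkman}, where the compactness needed to pass to the limit in the product $\langle \nabla u_\e,\nabla(w_\e\vp)\rangle$ comes from local Rellich plus the decay of $\nabla w_\e$. (iii) \emph{Subcritical} ($\s_\e\to\infty$): the holes are too small to matter; the energy bound gives $u_\e\rightharpoonup u$ in $D_0^{1,2}(\R^d)$, one shows the defect of the embedding $W_0^{1,2}(\OO_\e)\hookrightarrow D_0^{1,2}(\R^d)$ vanishes (any $D_0^{1,2}$ function can be cut off near the holes at asymptotically zero energy cost, precisely because $\s_\e\to\infty$), hence the limit equation is the plain Poisson equation and, testing the difference against itself, the convergence upgrades to strong in $D_0^{1,2}(\R^d)$. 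This is also where Assumption \ref{ass-g} (iii) is essential: without $f\in D^{-1,2}$ the duality pairing $\langle f,u_\e\rangle$ need not even be defined on the unbounded domain.

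\medskip

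\textbf{Main obstacle.} The genuinely delicate points are (a) proving the a priori estimates with the \emph{correct} $\s_\e$-weight uniformly in $\e$ on the \emph{unbounded} domain — in a bounded domain one has a global Poincar\'e inequality, but here the cell-wise Poincar\'e constant must be glued together without losing uniformity, and for $d=2$ the weak (logarithmic) decay of the fundamental solution makes the construction and estimates of $w_\e$ noticeably harder; and (b) handling the homogeneous Sobolev space $D_0^{1,2}(\R^2)$ correctly, since its elements are only defined modulo constants, which is exactly why Assumption \ref{ass-g} (iii) and Remark \ref{rem-2d-1} are needed and why the subcritical 2d case requires the zero-average/compact-support hypothesis on $f$. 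The rest is a careful transcription of the classical bounded-domain argument, using the function-space framework recalled in Section \ref{sec:homo-sobolev}.
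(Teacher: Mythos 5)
Your overall strategy (oscillating test functions built from a corrector vanishing on the holes, applied to each of the three regimes, with uniform a priori bounds coming from a cell-wise Poincar\'e inequality glued over $\Z^d$) matches the paper's, and your treatment of well-posedness via Lax--Milgram and the zero extension is exactly what the paper does in Proposition \ref{prop:Sto1}/\ref{prop:estv-lap} and Lemma \ref{lem-poincare}. The place where you genuinely diverge is the construction of the corrector. You describe a Cioranescu--Murat-type object: solve $-\Delta w=1$ on the \emph{exterior} domain $\R^d\setminus T$ with decay, rescale, and patch with a constant in a boundary layer of each cell. The paper instead uses Tartar's \emph{periodic cell problem} in the modified form from \cite{Jin19,Lu-Unify}: solve $-\Delta w_\eta=c_\eta^2$ in $Q_0\setminus(\eta T)$ with $w_\eta=0$ on $\eta T$ and $Q_0$-periodic boundary conditions (with $\eta=a_\e/\e$), then set $w_{\eta,\e}(\cdot)=w_\eta(\cdot/\e)$, so that $-\Delta w_{\eta,\e}=\s_\e^{-2}$ exactly, with no patching and no boundary-layer remainder to estimate. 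Both constructions produce the same constant $\bar w$, but the periodic cell problem is what makes the paper's argument \emph{uniform across all three regimes}: the identity $-\Delta w_{\eta,\e}=\s_\e^{-2}$ is exact, the bounds \eqref{est-weta-2} and the limits \eqref{w-q-e-lim-lap} hold simultaneously for $\s_\e\to0$, $\s_\e\to\s_*$, and $\s_\e\to\infty$, and there is no separate ``negligible remainder in thin annuli'' to control. Your route works but requires the extra patching estimates that the paper deliberately avoids.

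Two smaller imprecisions worth flagging. First, your scaling description of $w_\e$ (``matched to the constant $\s_\e^2$ (up to normalization) away from the hole, so that $w_\e\to\bar w$'') is internally inconsistent; in the paper's normalization $w_{\eta,\e}$ is $O(1)$ with $-\Delta w_{\eta,\e}=\s_\e^{-2}$, and $w_{\eta,\e}\to\bar w$ weakly in $L^2_{loc}$. Second, in the critical case you write $W_0^{1,2}(\R^d)=D_0^{1,2}(\R^d)$; this is false (for $d\ge3$, $D_0^{1,2}$ only requires $L^{2^*}$ integrability; for $d=2$, $D_0^{1,2}(\R^2)=D^{1,2}(\R^2)$ is defined modulo constants). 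In the critical case the paper's estimate places $u_\e$ in $W^{1,2}_0(\R^d)$ directly, so there is no need to invoke $D_0^{1,2}$ there; the homogeneous space only enters in the subcritical case, which is precisely why Assumption \ref{ass-g}~(iii) requires $f\in D^{-1,2}(\R^d)$.
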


\medskip

For the Stokes problem, we have the following theorem. Even with the presence of the pressure term, no additional assumption is needed. Again, the limits are taken up to possible extractions of subsequences.
\begin{theorem}\label{thm:Sto} Let $\OO = \R^d, \ d\geq 2$. Let $\vg$ satisfy Assumption \ref{ass-g}. Then the Stokes problem \eqref{Stokes-Oe} admits a unique weak solution $(\vv_\e,p_\e)\in W_0^{1,2}(\OO_\e;\R^d) \times L_{loc}^2(\OO_\e)$ where the uniqueness of $p_\e$ is defined up to modulating constants. And there exists an extension $\tilde p_\e \in L^2_{loc}(\R^d)$ of the pressure such that $\tilde p_\e = p_\e$ in $\OO_\e$. Moreover, we have the following homogenization results:
\begin{itemize}

\item[(i)] If $\lim_{\e\to 0} \s_{\e} = 0$, there holds
$$
{\s_{\e}^{-2}}{\vv_{\e}} \to \vv \ \mbox{weakly in} \ L^{2}(\R^{d};\R^{d})
$$
and $\tilde p_\e = p_\e^{(1)} + p_\e^{(2)}$ with
$$
\nabla p_\e^{(1)} \to \nabla p \  \mbox{weakly in}  \ L^2(\R^d;\R^d), \    p_\e^{(2)}\to 0 \ \mbox{strongly in} \  L^2(\R^d).
$$
Moreover, the limit $(\vv, p)\in L^2(\R^d;\R^d) \times D^{1,2}(\R^d)$ satisfies the Darcy's law:
 \be\label{Sto-Darcy}
\left\{\begin{aligned}
\vv &= A(\vg - \nabla p),\quad &&\mbox{in}~\R^d,\\
\dive \vv &=0 ,\quad &&\mbox{in}~\R^d.
\end{aligned}\right.\nn
\ee

\item[(ii)] If $\lim_{\e\to 0} \s_{\e} = \s_{*} \in (0,+\infty)$, then
$$
\vv_{\e} \to \vv \ \mbox{weakly in} \ W^{1,2}_{0}(\R^{d};\R^{d})
$$
and $\tilde p_\e = p_\e^{(1)} + p_\e^{(2)}$ with
$$
\nabla p_\e^{(1)} \to \nabla p^{(1)} \ \mbox{weakly in}  \ W^{m,2}(\R^d;\R^d) \ \mbox{for all $m\in \N$}, \quad p_\e^{(2)}\to p^{(2)} \  \mbox{weakly in} \  L^2(\R^d).
$$
Let $p = p^{(1)} + p^{(2)}$.  Then $(\vv, p)\in W^{1,2}_{0}(\R^{d};\R^{d}) \times \big(C^\infty(\R^d) + L^2(\R^d)\big)$ satisfies the Stokes-Brinkman equations:
 \be\label{Sto-Brinkman}
\left\{\begin{aligned}
-\Delta \vv +\nabla p  + \s_{*}^{-2} A^{-1} \vv &= \vg,\quad &&\mbox{in}~\R^{d},\\
\dive \vv &=0 ,\quad &&\mbox{in}~\R^{d}.
\end{aligned}\right.\nn
\ee

\item[(iii)] If $\lim_{\e\to 0} \s_{\e} = \infty$, we have
$$
 \vv_{\e} \to \vv \ \mbox{strongly in} \ D^{1,2}_{0}(\R^{d};\R^{d}),
$$
and $\tilde p_\e = p_\e^{(1)} + p_\e^{(2)}$ with
$$
\nabla p_\e^{(1)} \to 0 \  \mbox{strongly in}  \ L^2(\R^d;\R^d), \    p_\e^{(2)}\to p \ \mbox{weakly in} \  L^2(\R^d).
$$
Moreover, the limit $(\vv, p) \in D^{1,2}_{0}(\R^{d};\R^{d}) \times  L^2(\R^d)$ solves the Stokes equations:
 \be\label{Sto-Sto}
\left\{\begin{aligned}
-\Delta \vv +\nabla p &= \vg,\quad &&\mbox{in}~\R^{d},\\
\dive \vv &=0 ,\quad &&\mbox{in}~\R^{d}.
\end{aligned}\right. \nn
\ee
\end{itemize}

Here in cases (i) and (ii),  $A$ is a constant positive definite matrix determined by the model hole $T$ and given later in \eqref{A-eta-def-2}.
\end{theorem}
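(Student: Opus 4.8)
The plan is to combine the standard Tartar-type energy method with the cell/oscillating-test-function machinery (as in \cite{All-NS1,All-NS2,Lu-Unify}), but carried out uniformly on $\R^d$ by exhausting the space with large balls $B_R$ and using the homogeneous Sobolev structure introduced in Section~\ref{sec:homo-sobolev} to pass to the limit $R\to\infty$ after $\e\to 0$. First I would establish well-posedness of \eqref{Stokes-Oe} for fixed $\e$: since $\OO_\e\subset\R^d$ is a domain with holes of size $a_\e$ at mutual distance $\e$, the classical Poincar\'e inequality on each perforated cell $\e Q_k\setminus T_{\e,k}$ gives $\|u\|_{L^2(\e Q_k\setminus T_{\e,k})}\le C\e\|\nabla u\|_{L^2(\e Q_k\setminus T_{\e,k})}$ for $u\in W_0^{1,2}(\OO_\e)$, hence $W_0^{1,2}(\OO_\e)\hookrightarrow L^2(\R^d)$ with a (non-uniform in $\e$) constant; Lax--Milgram then yields a unique $\vv_\e\in W_0^{1,2}(\OO_\e;\R^d)$, and the pressure $p_\e$ with its extension $\tilde p_\e$ is recovered by the De Rham / Bogovskii argument adapted to the perforated domain exactly as in \cite{Tartar80,All-NS1,Lu-Unify} (this is where Proposition~\ref{prop:Sto1} is invoked).

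Next I would derive the uniform-in-$\e$ a priori bounds that distinguish the three regimes. Testing the equation with $\vv_\e$ gives $\|\nabla\vv_\e\|_{L^2}^2=\langle\vg,\vv_\e\rangle$; combining this with the scaled Poincar\'e inequality on the perforated domain — whose sharp constant is precisely $\s_\e$ (this is the content of the cell-problem/Cioranescu--Murat estimate, cf.\ \eqref{ratio}) — yields $\|\vv_\e\|_{L^2}\lesssim\s_\e\|\nabla\vv_\e\|_{L^2}$ and hence, according to which of $\s_\e\to0,\s_*,\infty$ holds and which dual space $\vg$ lives in by Assumption~\ref{ass-g}, the bounds $\|\s_\e^{-1}\vv_\e\|_{L^2}+\|\nabla\vv_\e\|_{L^2}\le C$ in the critical case, $\|\s_\e^{-2}\vv_\e\|_{L^2}\le C$ in the supercritical case, and $\|\nabla\vv_\e\|_{L^2}\le C$ (with $D_0^{1,2}$ control) in the subcritical case. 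These give weak limits $\vv$ in the asserted spaces after extracting a subsequence, and analogous bounds for $\tilde p_\e$ split as $p_\e^{(1)}+p_\e^{(2)}$ — the first part being the ``restriction-operator'' contribution that is smooth/harmonic away from the holes and the second the genuine pressure in $L^2$.

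The core step is the construction of the oscillating test functions. For each fixed direction $e_i\in\R^d$ one solves the Stokes cell problem on $Q_0$ with a hole of size $a_\e/\e$, producing $(\ww_\e^i,q_\e^i)$ with $\ww_\e^i=0$ on the hole, $\ww_\e^i\to e_i$ weakly, $\dive\ww_\e^i=0$, and $-\Delta\ww_\e^i+\nabla q_\e^i=\s_\e^{-2}\mu^i$ where $\mu^i$ is (up to the correct normalization) a measure concentrated near the holes converging to $\s_*^{-2}A^{-1}e_i$; the matrix $A$ in \eqref{A-eta-def-2} is read off from the energy $\int|\nabla\ww_\e^i|^2$ of these correctors. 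Periodicity of \eqref{def-holes1} lets one glue the cell solutions, rescaled by $\e$, into global fields $\ww_{\e,R}^i$ on $\R^d$. Plugging $\vp\,\ww_\e^i$ (with $\vp\in C_c^\infty(\R^d)$) as a test function against the equation for $\vv_\e$, and simultaneously $\vp\,\vv_\e$ against the equation for $\ww_\e^i$, one subtracts and passes to the limit; the crucial cancellation of the ``bad'' pressure/gradient cross terms uses $\dive\vv_\e=\dive\ww_\e^i=0$ together with the fact that $\ww_\e^i-e_i\rightharpoonup0$ in $L^2$ and $\nabla\ww_\e^i\rightharpoonup0$ in $W^{-1,2}_{\mathrm{loc}}$, which upgrades weak to strong compensated convergence (a div--curl / Tartar argument). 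This identifies, on every $B_R$, the limit equation \eqref{Sto-Brinkman} in the critical case, and degenerate versions \eqref{Sto-Darcy} (supercritical, where the viscous term drops and only the Darcy friction $A^{-1}$ survives because $\s_\e^{-2}\vv_\e$ is the right unknown) and \eqref{Sto-Sto} (subcritical, where the friction term vanishes in the limit and one in fact gets strong $D_0^{1,2}$ convergence by testing the difference with itself and using that the corrector defect is $o(1)$). Letting $R\to\infty$ and using that the limit problems are well-posed on $\R^d$ in the homogeneous Sobolev scale (Galdi's characterizations recalled above, and in particular the delicate $d=2$ case where $D_0^{1,2}(\R^2)=D^{1,2}(\R^2)$ forces the compatibility $\langle\vg,1\rangle=0$-type conditions already encoded in $D^{-1,2}$) pins down $(\vv,p)$ uniquely, hence the whole sequence — not just a subsequence — converges; and the strong convergence in case (iii) follows from the energy identity.

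I expect the main obstacle to be the passage $R\to\infty$, i.e.\ promoting the local-in-$B_R$ homogenization to a genuine statement on $\R^d$: one must control the correctors and the tails of $\vv_\e$ uniformly in both $\e$ and $R$, which in the noncritical cases requires the correct scaling of the unknown ($\s_\e^{-2}\vv_\e$ resp.\ $\vv_\e$ in $D_0^{1,2}$) so that the a priori bounds are $R$-independent, and in $d=2$ requires handling the logarithmic growth of the Stokes Green function and the constant-ambiguity of $D^{1,2}(\R^2)$ — precisely the difficulties flagged in the introduction as the reason the reflection method of \cite{HV2018} fails here. A secondary technical point is the uniform construction of the pressure extension $\tilde p_\e$ and its splitting with $R$-independent bounds, handled by a Bogovskii operator on the perforated annuli with norm controlled by the cell geometry.
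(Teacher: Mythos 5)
Your proposal has the right broad shape (Tartar/Allaire cell problems plus energy estimates plus a restriction-operator pressure extension), and the uniform Poincar\'e inequality with constant $\s_\e$ is indeed the engine of the a priori bounds, exactly as in the paper. But two steps as you describe them do not actually work, and they are not what the paper does.

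First, the pressure splitting. You propose to decompose $\tilde p_\e$ into ``the restriction-operator contribution that is smooth/harmonic away from the holes'' plus a genuine $L^2$ part, and to produce $\tilde p_\e$ via a De Rham/Bogovskii argument on the perforated domain. On the perforated whole space there is no Bogovskii operator with $\e$- and $R$-uniform norm (the paper explicitly lists this among the tools that are lost outside a bounded domain), and a physical ``near-hole vs.\ bulk'' splitting of the pressure gives no handle on the quantities one actually needs: what the definition $\langle\nabla\tilde p_\e,\varphi\rangle=\langle\nabla p_\e,R_\e\varphi\rangle$ delivers is only a bound on $\nabla\tilde p_\e$ in a $\s_\e$-weighted dual norm, namely $|\langle\nabla\tilde p_\e,\varphi\rangle|\lesssim \s_\e\|\nabla\varphi\|_{L^2}+\|\varphi\|_{L^2}$ (supercritical), $\|\nabla\varphi\|_{L^2}+\|\varphi\|_{L^2}$ (critical), or $\|\nabla\varphi\|_{L^2}+\s_\e^{-1}\|\varphi\|_{L^2}$ (subcritical). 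Turning that weak information into the decomposition asserted in the theorem — $\nabla p_\e^{(1)}$ bounded in $W^{m,2}$ for every $m$ in the critical case, $p_\e^{(2)}\to0$ strongly in $L^2$ in the supercritical case, etc.\ — is done by a Fourier-multiplier frequency cutoff: $p_\e^{(1)}=\chi(\s_\e D)\tilde p_\e$ (with $\chi(D)$ in the critical case) and $p_\e^{(2)}$ the complement. The low/high-frequency split is precisely what converts the $\s_\e$-scaled dual bound into the stated estimates, by Plancherel. Nothing in your proposal produces such a split, and without it the convergences of $p_\e^{(1)},p_\e^{(2)}$ cannot be recovered.

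Second, the $B_R$-exhaustion-after-$\e\to0$ is a fabricated obstacle, and pursuing it introduces difficulties that the paper never encounters. The correctors $w_{\eta,\e}^i$ are defined on all of $\R^d$ by periodicity, the test functions $\phi\in C_c^\infty(\R^d)$ are compactly supported, and every integral appearing in the limit passage is over a fixed compact set $\supp\phi$; consequently the homogenized equation is obtained directly in the sense of distributions on $\R^d$, with no $R\to\infty$ step. The only place in the paper where a ball exhaustion is used is the proof of well-posedness of \eqref{Stokes-Oe} for fixed $\e$, and there the estimates are $R$-independent because of the Poincar\'e inequality, not because of any corrector or screening control. Your claim that the ``main obstacle'' is uniform-in-$R$ control of the correctors and tails of $\vv_\e$ is therefore a red herring: the a priori bounds in Propositions~\ref{prop:estv} and \ref{prop:pressure} are already global, and the limit equation is local. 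Finally, a small correction: in the critical case the cross terms are handled by strong $L^2_{loc}$ compactness of $\vv_\e$ and $w_{\eta,\e}^i$ via Rellich (plus the fact that $\bar w^i,\bar q^i$ are constants), not by a div--curl compensated-compactness argument; the paper's modified cell problem (constant right-hand side $c_\eta^2 e^i$ rather than a measure on the holes, following \cite{Lu-Unify}) is normalized precisely so that this strong compactness is elementary to extract.
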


\medskip

Our theorems extend the pioneering results in \cite{Cioranescu-Murat, All-NS1, All-NS2} to the whole space. Unlike in a bounded domain, a lot nice properties in the perforated whole space are missing, such as Poincar\'e inequality, Bogovskii operator, and higher integrability implying lower ones. These properties are often used in the study of homogenization. Still, we derived the same uniform estimates and the same limit systems in the whole space. This is the main novelty of this paper.  We remark that our results also extend the homogenization results in \cite{HV2018} to two dimensional case in periodic holes configurations: for any $d\geq 2$ and any source term in $W^{-1,2}(\R^d)$, we derived the Brinkman type equations for the critical case.

Another novelty of this paper is the method of proof. The perforated whole space is a {\em bad domain} in the sense that it is not bounded, nor is its complementary due to the infinite holes distributed all over the space; this means it is not an exterior domain either. However, the same reason makes the domain a good one: one can benefit from the zero boundary conditions on the holes which are everywhere in $\R^d$ and obtain a Poincar\'e type inequality (see Lemma \ref{lem-poincare} given later). This is observed for bounded domains by Tartar \cite{Tartar80} for the special case where the mutual distance is comparable to the size of the holes, and is generalized by Allaire \cite{All-NS2}. The same idea applies to unbounded domains as shown in Lemma 4.5 in \cite{HV2018}. This Poincar\'e type inequality can be used to close the energy estimates for \eqref{Poisson-Oe} and \eqref{Stokes-Oe}, and to deduce the uniform estimates for the velocity field. The only issue for this Poincar\'e type inequality is that one has an unbounded estimate constant as $\e\to 0$ for the subcritical case of small holes. This is the reason that we assume the source term in $D^{-1,2}(\R^d)$ in Assumption \ref{ass-g} for the subcritical case.

\medskip

For the Stokes problem, additional difficulties arise due to the pressure term. An observation is that the restriction operator constructed in \cite{All-NS1} only relies on local properties. It turns out that it can be applied to the whole space. Then following \cite{Tartar80} and \cite{All-NS2}, the extension of the pressure can be defined by using the restriction operator through a dual pair. We introduce suitable frequency cut-off functions for different cases and deduce uniform estimates for the pressure extension. Given the desired uniform estimates, we employ a modified cell problem and use a unified approach to prove the homogenization results, as in \cite{Jin19} and \cite{Lu-Unify}.

\medskip

The rest of the paper is devoted to the proof of our theorems. We will show the proof details only for the Stokes problem. The Poisson case can be done similarly and we only give a sketch in Section \ref{sec:lap}; actually the proof is easier without the extra troubles caused by the pressure. The paper is organized as following. In Section \ref{sec:sol}, we prove a preliminary result concerning the well-posedness of the Stokes problem \eqref{Stokes-Oe} for each fixed $\e>0$. Then in Section \ref{sec:est} we deduce our desired uniform estimates. We finally derive the limit system in Section \ref{sec:limit}. 

In the sequel, we use $C$ to denote a universal positive constant independent of $\e$.

\section{Solvability of the Stokes problem}\label{sec:sol}

We shall prove the following result:
\begin{proposition}\label{prop:Sto1}
Let $\OO = \R^d, \ d\geq 2$ and let $\vg \in W^{-1,2}(\OO_\e;\R^d)$. For each fixed $\e\in (0,1)$, the Stokes problem \eqref{Stokes-Oe} admits a unique weak solution $(\vv_\e, p_\e) \in W_0^{1,2}(\OO_\e;\R^d)\times L_{loc}^2(\OO_\e)$ such that $\dive \vv_\e = 0$ and
\be\label{Sto-weakform1}
\int_{\OO_{\e}} \nabla \vv_{\e} : \nabla \varphi \,\dx  - \int_{\OO_{\e}} p_{\e} \, \dive \varphi \,\dx  = \left<\vg, \varphi\right>, \quad \mbox{for all} \  \varphi \in C_{c}^\infty(\OO_{\e};\R^d).
\ee
The uniqueness of $p_\e$ is defined up to adding constants. Moreover, there hold the estimates
\be\label{Sto-est1}
\|\nabla \vv_\e\|_{L^2(\OO_\e)} \leq C (1+\s_\e), \quad \|\vv_\e\|_{L^2(\OO_\e)} \leq C \s_\e(1+\s_\e),
\ee
where $\s_\e$ is the ratio given in \eqref{ratio}.

\end{proposition}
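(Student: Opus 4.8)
The plan is to follow the classical variational approach for the Stokes system, with the key additional input being the Poincaré-type inequality valid on the perforated whole space (Lemma~\ref{lem-poincare}), which compensates for the absence of the usual Poincaré inequality. First I would set up the right solution space: since $\dive \vv_\e = 0$ and $\vv_\e = 0$ on the holes, the natural space is the closed subspace $H_\e := \overline{C_{c,div}^\infty(\OO_\e;\R^d)}$ of $W_0^{1,2}(\OO_\e;\R^d)$, where the closure is taken in the $\|\nabla \cdot\|_{L^2}$-norm. The Poincaré inequality on $\OO_\e$ gives $\|\varphi\|_{L^2(\OO_\e)} \leq C\s_\e \|\nabla\varphi\|_{L^2(\OO_\e)}$ for all $\varphi \in C_c^\infty(\OO_\e;\R^d)$, so the seminorm $\|\nabla\cdot\|_{L^2}$ is in fact a norm on $H_\e$ equivalent to the full $W^{1,2}$-norm (with $\e$-dependent constants, which is fine for fixed $\e$), making $H_\e$ a Hilbert space. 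The bilinear form $a(\vv,\varphi) = \int_{\OO_\e} \nabla\vv : \nabla\varphi\,\dx$ is then bounded and coercive on $H_\e$, and $\varphi \mapsto \langle \vg, \varphi\rangle$ is a bounded linear functional on $H_\e$ (using the Poincaré inequality to handle the $L^2$-part of the $W^{-1,2}$-pairing). Lax--Milgram yields a unique $\vv_\e \in H_\e$ solving $a(\vv_\e,\varphi) = \langle\vg,\varphi\rangle$ for all $\varphi \in H_\e$, hence for all $\varphi \in C_{c,div}^\infty(\OO_\e;\R^d)$.

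Next I would recover the pressure. Testing with divergence-free $\varphi$ kills the pressure term, so to produce $p_\e$ one must go back to general test functions $\varphi \in C_c^\infty(\OO_\e;\R^d)$ (not necessarily divergence free). The functional $\varphi \mapsto \langle\vg,\varphi\rangle - \int_{\OO_\e}\nabla\vv_\e : \nabla\varphi\,\dx$ vanishes on $C_{c,div}^\infty(\OO_\e;\R^d)$; by the classical de Rham / Bogovskii-type argument applied \emph{locally} on bounded smooth subdomains of $\OO_\e$ (e.g.\ exhausting $\OO_\e$ by $\OO_\e \cap B(0,R)$, which for fixed $\e$ are bounded Lipschitz domains once $R$ is chosen so the boundary is regular), one obtains on each such piece a pressure $p_\e \in L^2_{loc}$, unique up to an additive constant, with $\nabla p_\e$ equal to that functional in the sense of distributions; patching these together (adjusting constants on overlaps) gives a global $p_\e \in L^2_{loc}(\OO_\e)$ satisfying \eqref{Sto-weakform1}, unique modulo constants. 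Uniqueness of the pair follows since any two solutions have divergence-free difference in $H_\e$ on which $a$ is coercive, forcing $\vv_\e^{(1)} = \vv_\e^{(2)}$, and then $\nabla(p_\e^{(1)} - p_\e^{(2)}) = 0$.

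For the quantitative estimates \eqref{Sto-est1}, I would test the weak formulation with $\vv_\e$ itself (legitimate by density since $\vv_\e \in H_\e$): the pressure term drops because $\dive\vv_\e = 0$, giving $\|\nabla\vv_\e\|_{L^2(\OO_\e)}^2 = \langle\vg,\vv_\e\rangle$. Now one bounds the right side according to which case of Assumption~\ref{ass-g} $\vg$ lives in. In the critical and supercritical cases $\vg \in W^{-1,2}$ (for supercritical even $L^2 \subset W^{-1,2}$), so $\langle\vg,\vv_\e\rangle \leq \|\vg\|_{W^{-1,2}}\|\vv_\e\|_{W^{1,2}(\OO_\e)} \leq C(\|\nabla\vv_\e\|_{L^2} + \|\vv_\e\|_{L^2}) \leq C(1+\s_\e)\|\nabla\vv_\e\|_{L^2}$ by the Poincaré inequality; absorbing one power of $\|\nabla\vv_\e\|_{L^2}$ yields $\|\nabla\vv_\e\|_{L^2} \leq C(1+\s_\e)$, and then a second application of Poincaré gives $\|\vv_\e\|_{L^2} \leq C\s_\e(1+\s_\e)$. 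In the subcritical case $\vg \in D^{-1,2}(\R^d)$ and one simply estimates $\langle\vg,\vv_\e\rangle \leq \|\vg\|_{D^{-1,2}}\|\nabla\vv_\e\|_{L^2}$ directly (this is precisely why the stronger assumption on $\vg$ is imposed there, since the Poincaré constant $\s_\e \to \infty$), giving $\|\nabla\vv_\e\|_{L^2} \leq C \leq C(1+\s_\e)$ and then $\|\vv_\e\|_{L^2} \leq C\s_\e \leq C\s_\e(1+\s_\e)$ via Poincaré.

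I expect the main obstacle to be the pressure construction: on the unbounded perforated domain $\OO_\e$ there is no global Bogovskii operator, so one cannot directly invoke the standard "$\langle F,\varphi\rangle = 0$ on divergence-free fields $\Rightarrow F = \nabla p$" lemma. The remedy is to work locally — recovering $p_\e$ on bounded smooth subdomains and gluing — which is legitimate here precisely because $\e$ is \emph{fixed} in this proposition (the uniformity in $\e$ is asserted only for the velocity norms, not for the pressure at this stage). Care is also needed in choosing the exhausting subdomains so that they are Lipschitz, and in verifying that the de Rham pressure is genuinely in $L^2_{loc}(\OO_\e)$ up near the hole boundaries, which uses the $C^1$ regularity of $\partial T$ assumed in \eqref{def-holes1}.
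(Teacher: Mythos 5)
Your argument is correct, but it follows a genuinely different route from the paper. The paper proves Proposition~\ref{prop:Sto1} by exhaustion: it solves the Stokes system on the bounded truncations $\OO_{\e,R}$ using the classical bounded-domain theory, derives $R$-uniform energy bounds via Lemma~\ref{lem-poincare}, extracts a weak limit as $R\to\infty$, and identifies the limit as a weak solution on $\OO_\e$; the pressure is then produced by a direct appeal to Lemma~IV.1.1 of Galdi's book (a de~Rham-type lemma valid on any locally Lipschitz domain). You instead work directly on the unbounded perforated domain, defining the Hilbert space $H_\e$ of divergence-free test fields, using Lemma~\ref{lem-poincare} to make $\|\nabla\cdot\|_{L^2}$ a genuine norm there, and invoking Lax--Milgram (here just Riesz representation) for the velocity; you then reconstruct the pressure by hand, running the local de~Rham argument on bounded subdomains and patching constants. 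The paper in fact explicitly remarks, right after stating the proposition, that it ``can also be proved by using the classical variational method together with the following Poincar\'e type inequality''---so your approach is the one the paper mentions but does not carry out. The exhaustion method has the mild advantage of deferring entirely to classical bounded-domain Stokes theory (existence \emph{and} pressure come for free from Galdi's bounded-domain results, and the $R\to\infty$ limit is routine), while your variational route is more self-contained at the level of the velocity but forces you to reprove a local pressure-recovery lemma; in practice the paper sidesteps that by citing Galdi's de~Rham lemma, which applies directly to the unbounded $\OO_\e$ (so the exhaustion-and-gluing you worry about in your last paragraph is not strictly necessary---you could equally well cite the same lemma). One minor inaccuracy in your write-up: the proposition assumes only $\vg\in W^{-1,2}(\OO_\e;\R^d)$ and asserts \eqref{Sto-est1} with a constant absorbing $\|\vg\|_{W^{-1,2}}$; the case split by Assumption~\ref{ass-g} you describe at the end belongs to Proposition~\ref{prop:estv}, not here, and the $W^{-1,2}$ computation alone suffices for \eqref{Sto-est1}.
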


Here we provide a simple proof by approximating the unbounded domain $\OO_\e$ by bounded ones. The key point is the following Poincar\'e type inequality in perforated domains. Note that Proposition \ref{prop:Sto1} can also be proved by using the classical variational method together with the following Poincar\'e type inequality.
\begin{lemma}\label{lem-poincare} Let $R>1$ and the holes $T_{\e,k}$ be given in \eqref{def-holes1}. Define
$$\OO_{\e,R} : = B(0,R)\setminus \bigcup_{k\in K_{\e,R}} T_{\e,k}, \quad K_{\e,R} : = \{k\in \Z^d: \e \overline Q_k \subset B(0,R)\}.$$
Then there holds
\be\label{poincare-1}
\|u\|_{L^2(\OO_{\e,R})} \leq C \s_\e \|\nabla u\|_{L^2(\OO_{\e,R})}, \quad \mbox{for all $u\in W^{1,2}_0(\OO_{\e,R})$,}
\ee
where $\s_\e$ is given in \eqref{ratio} and $C$ is independent of $R$. The above result holds for $R=\infty$:
\be\label{poincare-2}
\|u\|_{L^2(\OO_{\e})} \leq C \s_\e \|\nabla u\|_{L^2(\OO_{\e})}, \quad \mbox{for all} \  u \in W^{1,2}_0(\OO_{\e}).
\ee

\end{lemma}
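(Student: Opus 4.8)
The plan is to prove the Poincaré inequality \eqref{poincare-1} cell by cell, exploiting that every cube $\e Q_k$ (for $k\in K_{\e,R}$) contains a hole $T_{\e,k}$ of size $a_\e$ on which $u$ vanishes, and then to pass to $R=\infty$ by exhaustion. The core estimate is a rescaled single-cell inequality: for a function $v\in W^{1,2}$ on the unit cube $Q_0$ that vanishes on a fixed-shape hole of radius $r$ centered inside $Q_0$, one has $\|v\|_{L^2(Q_0)}\le C\,\phi(r)\,\|\nabla v\|_{L^2(Q_0)}$, where $\phi(r)^2 \sim r^{d-2}$ if $d\ge 3$ and $\phi(r)^2\sim |\log r|^{-1}$ if $d=2$. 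This is the classical capacity-type estimate underlying the work of Tartar and Allaire: the constant on $Q_0$ is governed by the $H^1$-capacity of the hole, which behaves like $r^{d-2}$ (resp.\ $1/|\log r|$) as $r\to 0$, and the extra factor comes from controlling the mean of $v$ by the Dirichlet energy via this capacity.

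Concretely, I would first recall or reprove this one-cell lemma. Write $\va{v}_{Q_0}$ for the average of $v$ over $Q_0$. By the standard Poincaré inequality on $Q_0$, $\|v-\va{v}_{Q_0}\|_{L^2(Q_0)}\le C\|\nabla v\|_{L^2(Q_0)}$, so it suffices to bound $|\va{v}_{Q_0}|$. Since $v=0$ on the hole $H\subset Q_0$ of ``radius'' $\sim r$, one has $|\va{v}_{Q_0}| = |\va{v}_{Q_0} - \va{v}_H|$ up to the already-controlled oscillation term, and one estimates $|\va{v}_{Q_0}-\va{v}_H|$ by testing the difference against the capacity potential of $H$ relative to $Q_0$ (or, equivalently, solving $-\Delta w_r = $ suitable data with $w_r=0$ on $H$, $w_r=1$ near $\d Q_0$ — the rescaled cell function). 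This yields $|\va{v}_{Q_0}|\le C\phi(r)\|\nabla v\|_{L^2(Q_0)} + C\|\nabla v\|_{L^2(Q_0)}$; since $\phi(r)$ is bounded when $r$ is bounded below and we only care about $r=a_\e/\e\to 0$, the dominant constant is $C\max(1,\phi(a_\e/\e))$. Then I would rescale: on the cube $\e Q_k$ with hole $T_{\e,k}$ of size $a_\e$, setting $v(y)=u(\e y + \e k)$ turns the hole into one of radius $\sim a_\e/\e$, so
\[
\|u\|_{L^2(\e Q_k)}^2 \le C\,\e^2\,\phi(a_\e/\e)^2\,\|\nabla u\|_{L^2(\e Q_k)}^2,
\]
and one checks that $\e^2\phi(a_\e/\e)^2$ is exactly (comparable to) $\s_\e^2$ as defined in \eqref{ratio}: for $d\ge 3$, $\e^2(a_\e/\e)^{d-2}=\e^d/a_\e^{d-2}=\s_\e^2$; for $d=2$, $\e^2/|\log(a_\e/\e)| = \s_\e^2$. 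Summing over $k\in K_{\e,R}$ gives \eqref{poincare-1} on the part of $\OO_{\e,R}$ covered by full cubes $\e Q_k$. Since $u\in W^{1,2}_0(\OO_{\e,R})$ vanishes outside $\bigcup_{k\in K_{\e,R}}\e Q_k$ (the thin boundary layer $B(0,R)\setminus \bigcup_{k\in K_{\e,R}}\e\overline Q_k$ is where $u$ is extended by zero after the natural zero-extension convention, or is itself handled by a cube missing the ball — either way it contributes nothing), we obtain \eqref{poincare-1} with $C$ independent of $R$.

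For the case $R=\infty$, \eqref{poincare-2}, I would argue by density and exhaustion: given $u\in W^{1,2}_0(\OO_\e)$, it is by definition a limit in the Dirichlet norm of $C_c^\infty(\OO_\e)$ functions, each of which is supported in some $\OO_{\e,R}$; apply \eqref{poincare-1} to each, with the $R$-independent constant, and pass to the limit. Alternatively one exhausts $\OO_\e$ by $\OO_{\e,R}$ as $R\to\infty$ using monotone convergence. The main obstacle — the only genuinely nontrivial point — is the sharp single-cell capacity estimate with the correct dependence $\phi(r)^2\sim r^{d-2}$ (resp.\ $1/|\log r|$) on the hole size, and verifying that after rescaling this matches $\s_\e$ on the nose; everything else (the zero-extension bookkeeping at $\d B(0,R)$, summation over cells, the density/exhaustion argument) is routine. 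One must also be mildly careful that the constant $\de_1$ in \eqref{def-holes1} guarantees $a_\e/\e$ times the inradius of $T$ stays in a fixed compact range of shapes so that the capacity bound is uniform in $\e$.
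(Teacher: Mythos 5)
Your overall strategy is the same as the paper's: establish a single-cell Poincar\'e inequality on each cube $\e Q_k$ whose constant is governed by the (inverse) capacity of the hole, rescale, sum over cells, and recover the $R=\infty$ case by density. The paper, however, proves the cell estimate more elementarily: for $x\in \e\overline Q_k$ it writes $u(x)=\int_{\de_1 a_\e}^{r_x}(\nabla u)(\e x_k+s\o_x)\cdot\o_x\,{\rm d}s$ using $u=0$ on $B(\e x_k,\de_1 a_\e)$, then applies H\"older and polar coordinates to get $\|u\|_{L^2(\e\overline Q_k)}^2\le C\,\e^d a_\e^{2-d}\|\nabla u\|_{L^2(B(\e x_k,2\e))}^2$ (resp.\ $C\,\e^2\log(\e/a_\e)\|\nabla u\|^2$ if $d=2$), so no Poincar\'e--Wirtinger, no capacity potential, and no average to control. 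Your capacity route can be made to work, but it is heavier; and the paper also proves \eqref{poincare-2} directly rather than by exhaustion (the $R<\infty$ case is stated as ``done similarly''), which sidesteps your boundary-layer bookkeeping (your claim that $u$ ``vanishes'' in the layer $B(0,R)\setminus\bigcup_{k\in K_{\e,R}}\e\overline Q_k$ is not correct; $u$ only vanishes on $\d B(0,R)$ there, and one would instead need a thin-strip Poincar\'e estimate in that annular region).

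There is also a concrete sign error that must be fixed for the argument to close. The one-cell constant should be $\phi(r)^2\sim r^{-(d-2)}$ for $d\ge3$ and $\phi(r)^2\sim|\log r|$ for $d=2$; the hole's capacity is $\sim r^{d-2}$ (resp.\ $\sim|\log r|^{-1}$) and the Poincar\'e constant is its reciprocal, which blows up as $r\to0$ --- consistent, incidentally, with your own remark that $\phi$ dominates for small $r$. With the corrected exponent the rescaling matches the definition in \eqref{ratio}: $\e^2\phi(a_\e/\e)^2=\e^2(\e/a_\e)^{d-2}=\e^d/a_\e^{d-2}=\s_\e^2$ for $d\ge3$, and $\e^2|\log(a_\e/\e)|=\s_\e^2$ for $d=2$. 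As written, $\e^2(a_\e/\e)^{d-2}=\e^{4-d}a_\e^{d-2}\neq\s_\e^2$ and $\e^2/|\log(a_\e/\e)|\neq\s_\e^2$, so the identification with $\s_\e$ in the proposal is false as stated, even though the intended computation is clear.
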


Similar results have been shown in \cite{All-NS2} (see Lemma 3.4.1 therein) for bounded domains. In \cite[Lemma 4.5]{HV2018}, the critical case in $\R^3$ was considered. Lemma \ref{lem-poincare} can be proved similarly. For the convenience of the readers, we briefly reproduce it below.

\begin{proof}[Proof of Lemma \ref{lem-poincare}] We will only prove \eqref{poincare-2}. The proof of \eqref{poincare-1} can be done similarly. We will assume $u \in C_c^\infty(\OO_{\e})$; for general $ u \in W^{1,2}_0(\OO_{\e})$, the result follows by density argument. By \eqref{def-holes1}, we observe that for each $k\in \Z^d$,
\be\label{pt-Qeta}
B(\e x_k, \de_1 a_{\e}) \subset \subset T_{\e, k}  \subset \subset B(\e x_k, \de_2 \e)  \subset \subset \e Q_k \subset \subset  B(\e x_k, 2 \e) \subset \bigcup_{|k'-k|\leq 3} \e \overline Q_{k'},
\ee
where
$$|k-k'| = |(k_1, \cdots, k_d) - (k'_1, \cdots, k'_d)| := \sum_{i=1}^d |k_i - k_i'|.$$

For each $x\in \OO_{\e}$, there exists $k\in \Z^d$ such that $x\in \e \overline Q_k$. Denote
$$r_x = |x - \e x_k|, \quad \o_x = \frac{x-\e x_k}{|x - \e x_k|}.$$
By the fact $u =0$ on $T_{\e, k} \supset B(\e x_k, \de_1 a_{\e}) $,  we have
\ba\label{ux-Qeta}
u(x)  & = u (\e x_k + r_x \o_x)  - u (\e x_k + \de_1  a_{\e} \o_x)\\
 &= \int_{\de_1 a_\e}^{r_x} \frac{\rm d}{{\rm d}s} u(\e x_k + s \o_x)\, {\rm d}s \\
 & = \int_{\de_1 a_\e}^{r_x} (\nabla u) (\e x_k + s \o_x) \cdot \o_x \, {\rm d}s.\nn
\ea
By H\"older's inequality, direct calculation gives
\ba\label{ux-Qeta-L2}
\|u\|_{L^2(\e \overline Q_k)}^2 & \leq \int_{B(\e x_k, 2 \e)\setminus B(\e x_k, \de_1 a_{\e}) } |u(x)|^2 \,\dx = \int_{\de_1 a_{\e}}^{2\e} \int_{\mathbb{S}^{d-1}} |u(\e x_k + r \o)|^2 r^{d-1} \,{\rm d} \o \,{\rm d} r \\
&\leq  \int_{\de_1 a_\e}^{2\e} \int_{\mathbb{S}^{d-1}} \left|\int_{\de_1 a_\e}^{r} (\nabla u) (\e x_k + s \o) \cdot \o \, {\rm d}s\right|^2   r^{d-1} \,{\rm d} \o \,{\rm d} r \\
& \leq \int_{\mathbb{S}^{d-1}} \int_{\de_1 a_\e}^{2 \e} r^{d-1} \left(\int_{\de_1 a_\e}^{r} s^{-d+1}\,{\rm d}s\right) \left(\int_{\de_1 a_\e }^{r} s^{d-1} |\nabla u(\e x_k + s \o)|^2 \, {\rm d} s \right)  \,{\rm d} r \,{\rm d} \o \\
&\leq \left(\int_{\de_1 a_\e}^{2 \e } r^{d-1} \left(\int_{\de_1 a_\e}^{r} s^{-d+1}\,{\rm d}s\right){\rm d} r\right) \left( \int_{\mathbb{S}^{d-1}}  \int_{\de_1 a_\e}^{2 \e} s^{d-1} |\nabla u(\e x_k + s \o)|^2 \, {\rm d} s  \,{\rm d} \o \right)\\
& \leq C \int_{\de_1 a_\e}^{2 \e } r^{d-1}{\rm d} r   \int_{\de_1 a_\e}^{2 \e} s^{-d+1}\,{\rm d}s \int_{B(\e x_k, 2 \e)\setminus B(\e x_k, \de_1 a_{\e})} |\nabla u(x)|^2 \,\dx.
\ea
We then deduce from \eqref{ux-Qeta-L2} that
\ba\label{u-Qeta-L2-f1}
&\|u\|_{L^2(\e \overline Q_k)}^2  \leq C \e^2 \log(\frac{\e}{a_\e})  \| \nabla u \|^2_{L^{2}(B(\e x_k,2\e))}, \quad  \mbox{if $d=2$},\\
&\|u\|_{L^2(\e \overline Q_k)}^2  \leq C \e^d a_\e^{-d+2}  \| \nabla u \|^2_{L^{2}(B(\e x_k,2\e))}, \quad \mbox{if $d\geq 3$}.
\ea
Recall the definition of $\s_\e$ in \eqref{ratio}. By \eqref{pt-Qeta} and \eqref{u-Qeta-L2-f1}, we obtain
\ba\label{u-Qeta-L2-f2}
\|u\|_{L^2(\e \overline Q_k)}^2  \leq C \s_\e^2  \| \nabla u \|^2_{L^{2}(B(\e x_k,2\e))} \leq C \s_\e^2 \sum_{|k'-k|\leq 3}\| \nabla u \|^2_{L^{2}(B(\e \overline Q_{k'}))}.\nn
\ea
Thus,
\ba\label{u-Qeta-L2-f3}
\|u\|_{L^{2}(\R^d)}^2 & = \sum_{k\in \Z^d} \|u\|_{L^2(\e \overline Q_k)}^2  \\
& \leq C \s_\e^2  \sum_{k\in \Z^d} \sum_{|k'-k|\leq 3}\| \nabla u \|^2_{L^{2}(B(\e \overline Q_{k'}))}\\
 & \leq C \s_\e^2  \sum_{k'\in \Z^d} \| \nabla u \|^2_{L^{2}(B(\e \overline Q_{k'}))} \sum_{|k'|\leq 3} 1 \\
 & \leq C d^{4} \s_\e^2  \|\nabla u\|_{L^{2}(\R^d)}^2.\nn
\ea
This is our desired inequality \eqref{poincare-2}.

\end{proof}

\begin{proof}[Proof of Proposition \ref{prop:Sto1}]

Now we consider the following Stokes equations in bounded domain $\OO_{\e,R}$ with $R>1$:
\be\label{Stokes-OeR}
\left\{\begin{aligned}
-\Delta \vv_{\e,R} +\nabla p_{\e,R} &= \vg,\quad &&\mbox{in}~\OO_{\e,R},\\
\dive \vv_{\e,R} &=0 ,\quad &&\mbox{in}~\OO_{\e,R},\\
\vv_{\e,R} & = 0,\quad &&\mbox{on} ~\d\OO_{\e,R}.
\end{aligned}\right.
\ee
For the bounded $C^1$ domain $\OO_{\e, R}$, the classical theory (see \cite{Galdi-book} for instance) ensures the existence and uniqueness of weak solution $(\vv_{\e,R}, p_{\e,R}) \subset W^{1,2}_0(\OO_{\e,R};\R^d) \times L_0^{2}(\OO_{\e,R})$ in the classical weak sense: $\dive \vv_{\e,R} = 0$ and
\be\label{Sto-weakform-eR}
\int_{\OO_{\e,R}} \nabla \vv_{\e,R} : \nabla \varphi \,\dx = \left<\vg, \varphi\right>, \quad \mbox{for all} \  \varphi \in C_{c,div}^\infty(\OO_{\e,R};\R^d).
\ee
Here $L_0^{2}(\OO_{\e,R})$ denotes the space of $L^{2}(\OO_{\e,R})$ functions which are of zero average. By density argument, the test functions in \eqref{Sto-weakform-eR} can be taken as arbitrary divergence free functions in $W^{1,2}_0(\OO_{\e,R};\R^d)$. Choosing the solution $\vv_{\e,R}$ itself as a test function in \eqref{Sto-weakform-eR} implies
\ba\label{est-veR-1}
\|\nabla \vv_{\e,R} \|_{L^2(\OO_{\e,R})}^2 & \leq C \|\vg\|_{W^{-1,2}(\OO_{\e,R})}  \|\vv_{\e,R}\|_{W^{1,2}(\OO_{\e,R})} \\
& \leq C \|\vg\|_{W^{-1,2}(\OO_{\e,R})}  (\|\vv_{\e,R}\|_{L^{2}(\OO_{\e,R})} + \|\nabla \vv_{\e,R} \|_{L^2(\OO_{\e,R})}).
\ea
Applying Lemma \ref{lem-poincare} in \eqref{est-veR-1} gives
\ba\label{est-veR-2}
\|\nabla \vv_{\e,R} \|_{L^2(\OO_{\e,R})}^2  \leq C \|\vg\|_{W^{-1,2}(\OO_{\e})} (1+\s_\e) \|\nabla \vv_{\e,R}\|_{L^{2}(\OO_{\e,R})}.\nn
\ea
Thus,
\ba\label{est-veR-3}
\|\nabla \vv_{\e,R} \|_{L^2(\OO_{\e,R})}  \leq C \|\vg\|_{W^{-1,2}(\OO_{\e})}(1+\s_\e), \ \| \vv_{\e,R} \|_{L^2(\OO_{\e,R})}  \leq C \|\vg\|_{W^{-1,2}(\OO_{\e})}\s_\e(1+\s_\e).
\ea

We can extend $\vv_{\e,R}$ by zero on $B(0,R)^c$  and obtain a bounded family $\{\vv_{\e,R}\}_{R>1}$ in $W^{1,2}(\OO_{\e};\R^d)$. Then up to a subsequence,
\ba\label{cov-veR}
\vv_{\e,R} \to \vv_\e  \ \mbox{weakly in $W_0^{1,2}(\OO_{\e};\R^d)$ as $R \to \infty$}.
\ea
Clearly $\dive \vv_\e = 0$ and the estimates in \eqref{Sto-est1} follow from \eqref{est-veR-3} and \eqref{cov-veR}. We shall show that $\vv_\e$ is a weak solution to the original Stokes problem \eqref{Stokes-Oe}.
Indeed, for any $\varphi \in C_{c,div}^\infty(\OO_{\e};\R^d)$, there exists $R_{\varphi}>1$ such that $\supp \,\varphi \subset \OO_{\e,R_\varphi}$. This means that $\varphi$ is a proper test function in \eqref{Sto-weakform-eR} for all $R\geq R_\varphi$. Testing \eqref{Sto-weakform-eR} by $\varphi$, passing $R\to \infty$, and applying \eqref{cov-veR} gives
\be\label{Sto-weakform-e}
\int_{\OO_\e} \nabla \vv_\e : \nabla \varphi \,\dx = \left< \vg , \varphi\right>,  \quad \mbox{for any} \  \varphi \in C_{c,div}^\infty(\OO_\e;\R^d).
\ee
For the pressure, by \eqref{Sto-weakform-e}, we may apply Lemma IV.1.1 in \cite{Galdi-book} to ensure that there exists $p_\e \in L_{loc}^2(\OO_\e)$ such that \eqref{Sto-weakform1} is satisfied.

\medskip

The uniqueness can be derived in a classical way.  Let $(\vu_\e, q_\e) \in W_0^{1,2}(\OO_\e;\R^d)\times L_{loc}^2(\OO_\e)$ be a solution to \eqref{Stokes-Oe} with $\vg = 0$, that means  $\dive \vu_\e = 0$ and
\be\label{Sto-weakform-u}
\int_{\OO_{\e}} \nabla \vu_{\e} : \nabla \varphi \,\dx  - \int_{\OO_{\e}} q_{\e} \, \dive \varphi \,\dx  = 0, \quad \mbox{for all} \  \varphi \in C_{c}^\infty(\OO_{\e};\R^d).
\ee
Choosing $\vu_\e$ as a test function in \eqref{Sto-weakform-u} implies $\|\nabla \vu_\e\|_{L^2(\OO_\e) }= 0$ and then $\vu_\e = 0$, due to $\vu_\e \in W_0^{1,2}(\OO_\e;\R^d)$. Thus,
\be\label{Sto-weakform-u-q}
 \int_{\OO_{\e}} q_{\e} \, \dive \varphi \,\dx  = 0, \quad \mbox{for all} \  \varphi \in C_{c}^\infty(\OO_{\e};\R^d).\nn
\ee
This implies $\nabla q_\e = 0$ and $q_\e$ is a constant in $\OO_\e$.  We completed the proof of Proposition \ref{prop:Sto1}.

\end{proof}

\section{Uniform estimates}\label{sec:est}

We know from Proposition \ref{prop:Sto1} that the Stokes problem \eqref{Stokes-Oe} admits a unique weak solution $(\vv_\e, p_\e)$ for merely $\vg \in W^{-1,2}(\OO_\e;\R^d)$.
However the estimates in \eqref{Sto-est1} are not enough to derive the homogenized models in Theorem \ref{thm:Sto}. In particular, we do not have any uniform estimates for the pressure so far. The goal of this section is to prove the uniform estimates given in Theorem \ref{thm:Sto} and we shall assume $\vg$ satisfy Assumption \ref{ass-g}.

\subsection{Estimates of $\vv_\e$}\label{sec:estv}

We will estimate $\vv_\e$ case by case. We first consider the critical case $\lim_{\e\to 0} \s_\e = \s_* \in (0,\infty)$. In this case $\{\s_\e\}_{0<\e<1}$ is bounded, so we can directly apply Proposition \ref{prop:Sto1} to obtain
\be\label{est-critical}
\|\vv_\e\|_{W^{1,2}(\OO_\e)} \leq C \|\vg\|_{W^{-1,2}(\R^d)}.
\ee

\medskip

We then consider the subcritical case $\lim_{\e\to 0} \s_\e = \infty$. In this case, we assume $\vg \in D^{-1,2}(\R^d;\R^d)$. By Theorem \ref{prop:Sto1},  taking $\vv_\e$ as a test function in \eqref{Sto-weakform1} gives
\ba\label{est-subcritical-1}
\|\nabla \vv_{\e} \|_{L^2(\OO_{\e})}^2  \leq C \|\vg\|_{D^{-1,2}(\OO_\e)}  \|\vv_{\e}\|_{D_0^{1,2}(\OO_{\e})} \leq C \|\vg\|_{D^{-1,2}(\R^d)}  \|\nabla \vv_{\e} \|_{L^2(\OO_{\e})}.\nn
\ea
This implies
\ba\label{est-subcritical-2}
\|\nabla \vv_{\e} \|_{L^2(\OO_{\e})} \leq C \|\vg\|_{D^{-1,2}(\R^d)}.
\ea
Unfortunately, we merely have an unbounded estimate for the $L^2$ norm of $\vv_\e$ by using the Poincar\'e inequality in Lemma \ref{lem-poincare}:
\ba\label{est-subcritical-3}
\|\vv_{\e} \|_{L^2(\OO_{\e})} \leq C \s_\e \|\nabla \vv_{\e} \|_{L^2(\OO_{\e})} \leq C \s_\e  \|\vg\|_{D^{-1,2}(\R^d)}.
\ea
However, if $d\geq 3$, since $\vv_\e \in W^{1,2}_0(\OO_\e;\R^d)$, the Gagliardo-Nirenberg-Sobolev inequality gives
\ba\label{est-subcritical-4}
\|\vv_{\e} \|_{L^{\frac{2d}{d-2}}(\OO_{\e})} \leq C \|\nabla \vv_{\e} \|_{L^2(\OO_{\e})} \leq C  \|\vg\|_{D^{-1,2}(\R^d)}.
\ea

\medskip

We finally consider the supercritical case  $\lim_{\e\to 0} \s_\e = 0$ where we assume $\vg\in L^2(\R^d;\R^d)$. Then taking $\vv_\e$ as a test function in \eqref{Sto-weakform1} and using the Poincar\'e inequality in Lemma \ref{lem-poincare} gives
\ba\label{est-supercritical-1}
\|\nabla \vv_{\e} \|_{L^2(\OO_{\e})}^2  \leq C \|\vg\|_{L^{2}(\OO_\e)}  \|\vv_{\e}\|_{L^{2}(\OO_{\e})} \leq C \s_\e \|\vg\|_{L^{2}(\R^d)}  \|\nabla \vv_{\e} \|_{L^2(\OO_{\e})}.\nn
\ea
Hence,
\ba\label{est-supercritical-2}
\|\nabla \vv_{\e} \|_{L^2(\OO_{\e})}  \leq C \s_\e \|\vg\|_{L^{2}(\R^d)} , \quad \| \vv_{\e} \|_{L^2(\OO_{\e})}  \leq C \s_\e^2 \|\vg\|_{L^{2}(\R^d)}.
\ea

\medskip

We summarize the above estimates in \eqref{est-critical}--\eqref{est-supercritical-2} into the following proposition where the weak limits are taken up to extracting subsequences.
\begin{proposition}\label{prop:estv}
Let $\OO=\R^d, \ d\geq 2$ and $\vg$ satisfy Assumption \ref{ass-g}. Then we have the following estimates for the solution $\vv_\e$ obtained in Proposition \ref{prop:Sto1}:
\begin{itemize}
\item[(i)] For the critical case $\lim_{\e\to 0} \s_{\e} = \s_{*} \in (0,+\infty)$, $$\|\vv_\e\|_{W^{1,2}(\OO_\e)} \leq C \|\vg\|_{W^{-1,2}(\R^d)}.$$
Hence, $\vv_\e \to \vv$ weakly in $W^{1,2}_0(\R^d;\R^d)$ with $\dive \vv = 0$.

\item[(ii)] For the subcritical case $\lim_{\e\to 0} \s_{\e} = \infty$, $$\|\nabla \vv_{\e} \|_{L^2(\OO_{\e})} + \s_\e^{-1}\|\vv_{\e} \|_{L^2(\OO_{\e})}   \leq C \|\vg\|_{D^{-1,2}(\R^d)}.$$
If $d\geq 3$, $\|\vv_{\e} \|_{L^{\frac{2d}{d-2}}(\OO_{\e})} \leq C\|\vg\|_{D^{-1,2}(\R^d)}.$ Hence, $\vv_\e \to \vv$ weakly in $D^{1,2}_0(\R^d;\R^d)$ with $\dive \vv = 0$.

\item[(iii)] For the supercritical case $\lim_{\e\to 0} \s_{\e} = 0$, $$\|\nabla \vv_{\e} \|_{L^2(\OO_{\e})}  \leq C \s_\e \|\vg\|_{L^{2}(\R^d)} , \quad \| \vv_{\e} \|_{L^2(\OO_{\e})}  \leq C \s_\e^2 \|\vg\|_{L^{2}(\R^d)}.$$
    Hence, $\s_\e^{-2}\vv_\e \to \vv$ weakly in $L^{2}(\R^d;\R^d)$ with $\dive \vv = 0$.
\end{itemize}
\end{proposition}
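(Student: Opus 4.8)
The statement is in effect a bookkeeping summary: the three displayed bounds are exactly \eqref{est-critical}, \eqref{est-subcritical-2}--\eqref{est-subcritical-4} and \eqref{est-supercritical-2}, all of which have already been derived above, so the plan is to recall how each is obtained and then pass to weak limits. The common scheme is the energy method: by a density argument one extends the weak formulation \eqref{Sto-weakform1} to test functions in $W_0^{1,2}(\OO_\e;\R^d)$ and takes $\varphi = \vv_\e$; since $\dive \vv_\e = 0$ the pressure contribution $\int_{\OO_\e} p_\e \dive \vv_\e \,\dx$ drops out, leaving the identity $\|\nabla \vv_\e\|_{L^2(\OO_\e)}^2 = \left<\vg, \vv_\e\right>$. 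One then estimates the right-hand side by duality in the space prescribed by Assumption \ref{ass-g}, always using the zero extension of $\vv_\e$ to $\R^d$ and, where needed, the Poincar\'e inequality of Lemma \ref{lem-poincare} and the Gagliardo--Nirenberg--Sobolev inequality \eqref{GNS-ineq}.

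Concretely: in the critical case $\{\s_\e\}_{0<\e<1}$ is bounded, so Proposition \ref{prop:Sto1} applies directly and gives $\|\vv_\e\|_{W^{1,2}(\OO_\e)} \leq C\|\vg\|_{W^{-1,2}(\R^d)}$. In the subcritical case $\left<\vg,\vv_\e\right> \leq \|\vg\|_{D^{-1,2}(\R^d)}\|\nabla\vv_\e\|_{L^2(\OO_\e)}$, since the norm on $D_0^{1,2}$ of the zero extension is $\|\nabla\vv_\e\|_{L^2}$, so the energy identity yields $\|\nabla\vv_\e\|_{L^2(\OO_\e)} \leq C\|\vg\|_{D^{-1,2}(\R^d)}$; the $L^2$ bound carrying the factor $\s_\e$ is then immediate from Lemma \ref{lem-poincare}, and for $d\geq 3$ the $L^{2d/(d-2)}$ bound follows from \eqref{GNS-ineq}. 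In the supercritical case $\vg \in L^2$, so $\left<\vg,\vv_\e\right> \leq \|\vg\|_{L^2(\R^d)}\|\vv_\e\|_{L^2(\OO_\e)} \leq C\s_\e\|\vg\|_{L^2(\R^d)}\|\nabla\vv_\e\|_{L^2(\OO_\e)}$ by Lemma \ref{lem-poincare}, whence $\|\nabla\vv_\e\|_{L^2(\OO_\e)} \leq C\s_\e\|\vg\|_{L^2(\R^d)}$ and, by a second application of Lemma \ref{lem-poincare}, $\|\vv_\e\|_{L^2(\OO_\e)} \leq C\s_\e^2\|\vg\|_{L^2(\R^d)}$.

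Finally, in each case the indicated rescaling of $\vv_\e$ ($\vv_\e$ itself in cases (i) and (ii), and $\s_\e^{-2}\vv_\e$ in case (iii)) is bounded in a reflexive Hilbert space --- $W_0^{1,2}(\R^d;\R^d)$, $D_0^{1,2}(\R^d;\R^d)$, and $L^2(\R^d;\R^d)$ respectively --- so that, up to a subsequence, it converges weakly to some $\vv$; passing to the limit in $\int_{\R^d} \vv_\e \cdot \nabla \phi \,\dx = 0$ for $\phi \in C_c^\infty(\R^d)$ gives $\dive \vv = 0$. I expect no serious obstacle here; the only points meriting care are the density argument making $\vv_\e$ an admissible test function with the pressure term vanishing, and --- in the two-dimensional subcritical case, where $D_0^{1,2}(\R^2) = D^{1,2}(\R^2)$ consists of functions defined only modulo constants --- checking that the zero extension of $\vv_\e$ still determines a bounded element of that space, which it does because $\vv_\e$ is by definition a $W^{1,2}$-limit of functions in $C_c^\infty(\OO_\e;\R^d)$ and hence in particular a limit of such functions in the gradient seminorm.
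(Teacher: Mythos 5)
Your proposal is correct and takes essentially the same route as the paper: take $\vv_\e$ as test function in the divergence-free weak formulation (equivalently, \eqref{Sto-weakform-e} extended by density), estimate the duality pairing $\langle\vg,\vv_\e\rangle$ in the space prescribed by Assumption \ref{ass-g} for each of the three cases, close the energy estimate with Lemma \ref{lem-poincare} where needed, and invoke weak compactness plus the passage of $\dive\vv_\e=0$ to the limit. One small caution: it is cleaner to work directly with the divergence-free formulation \eqref{Sto-weakform-e} than to extend \eqref{Sto-weakform1} to all of $W_0^{1,2}(\OO_\e;\R^d)$, since $p_\e$ is only in $L^2_{loc}(\OO_\e)$ and the pressure term is not a priori well defined for general $W_0^{1,2}$ test functions; this is what the paper does, and you implicitly acknowledge the same point.
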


In above proposition, $\dive \vv = 0$ follows directly from $\dive \vv_\e = 0$.

\subsection{Estimates of the pressure}

It is more tricky to estimate the pressure. To do this, we will employ the restriction operator constructed by Allaire in \cite{All-NS1, All-NS2} (see also earlier in \cite{Tartar80} for a specific case). Firstly we observe that Allaire's construction relies essentially on analysis in the neighbourhood of each single hole. After checking the argument in Section 2.2 \cite{All-NS1}, his construction also works for unbounded domains:
\begin{proposition}\label{prop:res}
Let $\OO = \R^d, \ d\geq 2$. For any $\vu \in W_0^{1,2}(\R^d;\R^d)$, define $R_\e (\vu)$ as following:
\ba\label{def-Reu}
&R_\e(\uu) (x) :=\uu (x), &&\mbox{if}\  x \in \R^d \setminus\big( \bigcup_{k\in \Z^d} B(\e x_k, \de_2 \e)\big),\\
&R_\e(\uu) (x) :=\uu_{\e,k} (x), &&\mbox{if} \  x \in B(\e x_k,\de_2 \e)\setminus T_{\e,k},\  \mbox{for each} \ k\in \Z^d,\nn
\ea
where $\uu_{\e,k}$ solves
\be\label{def-uek}\left\{\begin{aligned}
&-\Delta \uu_{\e,k} + \nabla p_{\e,k}  =-\Delta \uu, \ &&\mbox{in}\ B(\e x_k,\de_2 \e)\setminus T_{\e,k},\\
&\dive \uu_{\e,k}=\dive \uu + \frac{1}{|B(\e x_k,\de_2 \e)\setminus T_{\e,k}|}\int_{T_{\e,k}} \dive \uu\, \dx, \ &&\mbox{in}\ B(\e x_k,\de_2 \e)\setminus T_{\e,k},\\
&\uu_{\e,k}= \uu, \ &&\mbox{on}\ \d B(\e x_k,\de_2 \e),\\
&\uu_{\e,k}=0,\ &&\mbox{on}\ \d  T_{\e,k}.
\end{aligned}\right.\nn
\ee
Then $R_\e$ defines a linear operator (named {\em restriction operator}) from $W_0^{1,2}(\R^d;\R^d)$ to $W_0^{1,2}(\OO_\e;\R^d)$ such that
\begin{itemize}
\item[(i)] $\uu \in W_0^{1,2}(\OO_\e;\R^d) \Longrightarrow R_\e (\tilde \uu)= \uu \ \mbox{in}\ \OO_\e,\ \mbox{where $\tilde \vu$ is the zero extension of $\vu$ in $\R^d$}.$
\item[(ii)] $\uu \in W_0^{1,2}(\R^d;\R^d),\  \dive \uu =0 \ \mbox{in} \ \R^d \Longrightarrow \dive R_\e (\uu) =0 \ \mbox{in} \ \OO_\e.$
\item[(iii)] For each $\uu \in W_0^{1,2}(\R^d;\R^d)$, $\|\nabla R_\e(\uu)\|_{L^2(\OO_\e)} \leq C \, \big( \|\nabla \uu\|_{L^2(\R^d)}+\s_\e^{-1} \|\uu\|_{L^2(\R^d)}\big)$, and by the Poincar\'e inequality in
Lemma \ref{lem-poincare}, there holds $\| R_\e(\uu)\|_{L^2(\OO_\e)} \leq C \, \big( \s_\e \|\nabla \uu\|_{L^2(\R^d)}+ \|\uu\|_{L^2(\R^d)}\big)$.
\end{itemize}
\end{proposition}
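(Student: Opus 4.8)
The plan is to follow Allaire's original construction of the restriction operator from Section 2.2 of \cite{All-NS1}, but to verify at each step that only \emph{local} information near a single hole is used, so that the construction carries over verbatim to the unbounded domain $\R^d$. The key observation is that the cells $\e Q_k$ are disjoint, each hole $T_{\e,k}$ sits inside its own ball $B(\e x_k,\de_2\e)\subset\subset\e Q_k$, and the definition of $R_\e(\uu)$ modifies $\uu$ only inside the annular regions $B(\e x_k,\de_2\e)\setminus T_{\e,k}$ while leaving $\uu$ untouched elsewhere; hence the global $L^2$ bounds will be obtained by summing the (identical) local estimates over $k\in\Z^d$, exactly as in the Poincar\'e-type Lemma \ref{lem-poincare}.

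First I would record the solvability of the auxiliary problem \eqref{def-uek} on a single annulus $B(\e x_k,\de_2\e)\setminus T_{\e,k}$: this is a Stokes system with inhomogeneous boundary data $\uu$ on the outer sphere and zero on $\d T_{\e,k}$, and a prescribed divergence whose mean over the annulus has been corrected (by the added term $\frac{1}{|B(\e x_k,\de_2\e)\setminus T_{\e,k}|}\int_{T_{\e,k}}\dive\uu\,\dx$) so that the compatibility condition holds; standard theory on the bounded Lipschitz annulus gives a unique $\uu_{\e,k}\in W^{1,2}$ and $p_{\e,k}\in L^2_0$. Then properties (i) and (ii) are essentially immediate: if $\uu$ already vanishes on all the holes, then on each annulus $\uu$ itself solves \eqref{def-uek} (the correction term vanishes since $\dive\uu$ integrates to zero against the indicator of $T_{\e,k}$ when $\uu=0$ there... more precisely $\int_{T_{\e,k}}\dive\uu=\int_{\d T_{\e,k}}\uu\cdot\nnn=0$), so by uniqueness $\uu_{\e,k}=\uu$ and $R_\e(\tilde\uu)=\uu$; and if $\dive\uu=0$ globally then on each annulus $\dive\uu_{\e,k}=0$ by the second line of \eqref{def-uek}, while on the complement $R_\e(\uu)=\uu$ is divergence free, and the matching boundary values $\uu_{\e,k}=\uu$ on $\d B(\e x_k,\de_2\e)$ guarantee there is no distributional divergence concentrated on the interface, so $\dive R_\e(\uu)=0$ in $\OO_\e$. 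One must also check $R_\e(\uu)\in W^{1,2}_0(\OO_\e;\R^d)$, i.e. the trace on $\d T_{\e,k}$ is zero (built into \eqref{def-uek}) and the pieces glue across $\d B(\e x_k,\de_2\e)$ into a $W^{1,2}$ function (again from the matching boundary data).

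The quantitative estimate (iii) is the heart of the matter. Here I would rescale the annulus $B(\e x_k,\de_2\e)\setminus T_{\e,k}$ to a fixed reference configuration. Writing $x=\e x_k+\e y$ turns the hole $T_{\e,k}=\e x_k+a_\e T$ into $(a_\e/\e)T$, a hole of relative size $a_\e/\e\le 1$ sitting in the fixed ball $B(0,\de_2)$. On this rescaled domain one proves, by the standard energy estimate for the Stokes problem with the given data (and using a uniform Bogovskii-type bound for the divergence term that is independent of the small parameter $a_\e/\e$ — this is exactly the delicate point in Allaire's paper), a bound of the schematic form $\|\nabla_y\uu_{\e,k}\|_{L^2}^2\lesssim\|\nabla_y\uu\|_{L^2}^2+(\e/a_\e)^{d-2}\|\uu/\e\|_{L^2}^2$ for $d\ge 3$, and the analogous bound with $|\log(a_\e/\e)|$ for $d=2$; scaling back and summing over $k$ (noting again that the balls $B(\e x_k,\de_2\e)$ have bounded overlap, as in \eqref{pt-Qeta}) produces precisely $\|\nabla R_\e(\uu)\|_{L^2(\OO_\e)}\le C(\|\nabla\uu\|_{L^2(\R^d)}+\s_\e^{-1}\|\uu\|_{L^2(\R^d)})$ after recognizing the factors $(\e^{d}/a_\e^{d-2})^{-1/2}=\s_\e^{-1}$ (resp. $\e^{-1}|\log(a_\e/\e)|^{-1/2}=\s_\e^{-1}$ in $d=2$). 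The second half of (iii) is then obtained for free by invoking the Poincar\'e inequality \eqref{poincare-2} of Lemma \ref{lem-poincare} on $R_\e(\uu)\in W^{1,2}_0(\OO_\e;\R^d)$, which converts the $\s_\e^{-1}$-weighted gradient bound into $\|R_\e(\uu)\|_{L^2(\OO_\e)}\le C(\s_\e\|\nabla\uu\|_{L^2(\R^d)}+\|\uu\|_{L^2(\R^d)})$. The main obstacle is the uniform-in-$(a_\e/\e)$ control of the solution of \eqref{def-uek}, in particular handling the prescribed-divergence right-hand side on a thin annulus without losing a power of $a_\e/\e$; but since this is an entirely local estimate on a single rescaled cell, it is identical to the one already carried out in \cite{All-NS1, All-NS2} and requires no new idea for the whole-space setting — one only needs to observe that the global bound is obtained by $\ell^2$-summation of these local bounds, which is legitimate precisely because the perforation is periodic with disjoint cells.
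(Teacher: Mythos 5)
Your proposal takes the same route as the paper, which simply notes that Allaire's construction in Section 2.2 of \cite{All-NS1} is entirely local to each cell $\e Q_k$ and therefore carries over to $\OO=\R^d$ by $\ell^2$-summation over the disjoint cells; the paper cites Allaire and omits the details, and your account of (i), (ii), and the rescaling strategy for (iii) is the right one. One small correction to your schematic intermediate bound: the capacity contribution of the boundary data on the rescaled annulus scales like $(a_\e/\e)^{d-2}$, not $(\e/a_\e)^{d-2}$; with the local estimate written as $\|\nabla_y \uu_{\e,k}\|_{L^2}^2\lesssim\|\nabla_y \uu\|_{L^2}^2+(a_\e/\e)^{d-2}\|\uu\|_{L^2}^2$ and the unscaling $\|\nabla_x\cdot\|^2=\e^{d-2}\|\nabla_y\cdot\|^2$, $\|\cdot\|_{L^2_x}^2=\e^{d}\|\cdot\|_{L^2_y}^2$, the coefficient becomes $a_\e^{d-2}/\e^{d}=\s_\e^{-2}$, which is the final answer you correctly state.
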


The proof of Proposition \ref{prop:res} can be done exactly as in \cite{All-NS1} and we omit it. We remark that a $W^{1,q}$ version of Allaire's restriction operator is shown by the author in \cite{Lu-Stokes} for $3/2<q<3$ in a bounded domain in $\R^3$.

Applying the restriction operator $R_\e$, the extension of the pressure, denoted by $\tilde p_\e$, is defined by the following formula (see the original idea of Tartar in \cite{Tartar80} in bounded domains):
\be\label{def-tp}
\left<\nabla \tilde p_\e, \varphi \right>_{W^{-1,2}(\R^d),W^{1,2}_0(\R^d)} = \left<\nabla  p_\e, R_\e(\varphi) \right>_{W^{-1,2}(\OO_\e),W^{1,2}_0(\OO_\e)}, \quad \mbox{for all} \ \varphi \in W^{1,2}_0(\R^d;\R^d),
\ee
where $p_\e$ is the pressure of the Stokes problem \eqref{Stokes-Oe}.  Note that the above formulation \eqref{def-tp} is well defined due to the three properties of $R_\e$ in Proposition \ref{prop:res}:
 \begin{itemize}
\item  Property (iii) and the estimates of $\vv_\e$ in Proposition \ref{prop:estv} ensure $\nabla \tilde p_\e \in W^{-1,2}(\R^d;\R^d)$.

\item   Property (ii) ensures the compatibility: for each $\varphi \in W^{1,2}_0(\R^d;\R^d)$ with $\dive \varphi = 0$ one has $\dive R_{\e} (\varphi)  = 0$, then one deduces naturally from \eqref{def-tp} that
$\left<\nabla \tilde p_\e, \varphi \right> = \left<\nabla  p_\e, R_\e(\varphi) \right> = 0.$

\item  Property (i) ensures $\nabla \tilde p_{\e} = \nabla p_{\e}$ in $\OO_{\e}$.

\end{itemize}

Now we are in the position to deduce the uniform estimates of $\tilde p_\e$, case by case. We will repeatedly use the estimates of $\vv_\e$ in Proposition \ref{prop:estv}.

We first consider the critical case
$\lim_{\e\to 0} \s_\e = \s_*\in (0,\infty)$ where we have $\|\vv_\e\|_{W^{1,2}(\OO_\e)} \leq C $ from Proposition \ref{prop:estv}. Then, by Property (iii) in Proposition \ref{prop:res}, using the Stokes equations
\eqref{Stokes-Oe}, we obtain for all $\varphi \in W_0^{1,2}(\R^d;\R^d)$ that
\ba\label{est-tp-1}
|\left<\nabla \tilde p_\e, \varphi \right>_{\R^d}| & = |\left< \nabla p_\e, R_\e(\varphi) \right>_{\OO_{\e}}|  =  |\left< \Delta \vv_\e + \vg, R_\e(\varphi) \right>_{\OO_{\e}}| \\
&\leq C (\|\nabla \vv_\e\|_{L^{2}(\OO_\e)} + \|\vg\|_{W^{-1,2}(\R^d)}) \|R_\e(\varphi)\|_{W^{1,2}(\OO_\e)} \\
& \leq C (\|\nabla \varphi\|_{L^2(\R^d)} + \| \varphi \|_{L^2(\R^d)}).\nn
\ea
This means the family $\{\nabla \tilde p\}_{0<\e<1}$ is bounded in $W^{-1,2}(\R^d;\R^d)$. Thus, by the characterization of Sobolev space $W^{1,2}(\R^d)$ using Fourier transforms, we have
\be\label{est-tp-0}
\|\nabla \tilde p_\e\|_{W^{-1,2}(\R^d)}^2  = C_d \int_{\R^d} |\xi|^2 (1+|\xi|^2)^{-1} |\calF[\tilde p_\e] (\xi)|^2\,{\rm d}\xi \leq C,
\ee
where $\calF[\cdot]$ denotes the Fourier transform and $C_d$ is a constant depending only on the dimension $d$. Let $\chi\in C_c^\infty(B(0,2))$ be a cutoff function such that $0\leq \chi \leq 1$ and $\chi = 1$ on $B(0,1)$. We can decompose $\tilde p_\e = p_\e^{(1)} + p_\e^{(2)}$ with
\be\label{def-tp-12}
p_\e^{(1)} : = \chi(D) \tilde p_\e, \quad p_\e^{(2)} : = (1-\chi(D))\tilde p_\e,\nn
\ee
where $\chi(D)$ is the Fourier multiplier with symbol $\chi(\xi)$. Then by \eqref{est-tp-0}, direct calculation implies
\ba\label{def-tp-3}
\|\nabla p_\e^{(1)}\|_{W^{m,2}(\R^d)}^2 & =  C_d \int_{|\xi|\leq 2} |\xi|^2 (1+|\xi|^2)^{m} |\chi(\xi)|^2 |\calF[\tilde p_\e] (\xi)|^2\,{\rm d}\xi \\
& \leq  C_d 5^{m+1} \int_{|\xi| \leq 2} |\xi|^2 (1+|\xi|^2)^{-1} |\calF[\tilde p_\e] (\xi)|^2\,{\rm d}\xi\\
& \leq C 5^m,  \ \ \mbox{for all $ m \in \N$},
\ea
and
\ba\label{def-tp-4}
 \|p_\e^{(2)}\|_{L^2(\R^d)}^2  &= C_d \int_{|\xi|\geq 1} |1-\chi(\xi)|^2 |\calF[\tilde p_\e] (\xi)|^2\,{\rm d}\xi \\
 & \leq 2 C_d \int_{|\xi| \geq 1} |\xi|^2 (1+|\xi|^2)^{-1} |\calF[\tilde p_\e] (\xi)|^2\,{\rm d}\xi \\
 & \leq C.
\ea
This implies $ \nabla p_\e^{(1)} \in \cap_{m=1}^\infty W^{m,2}(\R^d;\R^d) = \mathcal{S}(\R^d;\R^d)$ the Schwartz class.

\medskip

Now we deal with the supercritical case $\lim_{\e\to 0} \s_\e = 0$ where we have $\|\nabla \vv_\e\|_{L^{2}(\OO_\e)} \leq C \s_\e$ and $\|\vv_\e\|_{L^{2}(\OO_\e)} \leq C \s_\e^2$ from Proposition \ref{prop:estv}. Then,
by Property (iii) in Proposition \ref{prop:res}, using \eqref{Stokes-Oe} and Lemma \ref{lem-poincare}, we obtain for all $\varphi \in W_0^{1,2}(\R^d;\R^d)$ that
\ba\label{est-tp-super-1}
|\left<\nabla \tilde p_\e, \varphi \right>_{\R^d}| & = |\left< \Delta \vv_\e + \vg, R_\e(\varphi) \right>_{\OO_{\e}}| \\
&\leq C \|\nabla \vv_\e\|_{L^{2}(\OO_\e)}\|\nabla R_\e(\varphi)\|_{L^{2}(\OO_\e)} + C \|\vg\|_{L^{2}(\R^d)}\|  R_\e(\varphi)\|_{L^{2}(\OO_\e)}   \\
& \leq C \s_\e \|\nabla R_\e(\varphi)\|_{L^2(\R^d)} + C \s_\e \| \nabla R_\e(\varphi) \|_{L^2(\R^d)}\\
& \leq C (\s_\e \|\nabla \varphi\|_{L^2(\R^d)} + \| \varphi \|_{L^2(\R^d)}).
\ea
For fxied $\e>0$, consider the semiclassical Sobolev space $W^{1,2}_{\s_\e}(\R^d)$ armed with the norm
$$
\|u\|_{W^{1,2}_{\s_\e}(\R^d)}: = \big(\s_\e^2\|\nabla \varphi\|_{L^2(\R^d)}^2 + \| \varphi \|_{L^2(\R^d)}^2 \big)^{\frac{1}{2}} = C_d \big( \int_{\R^d} (1+\s_\e^2 |\xi|^2) |\calF[u] (\xi)|^2\,{\rm d}\xi\big)^{\frac{1}{2}}.
$$
Then by \eqref{est-tp-super-1}, the family $\{\nabla \tilde p_\e\}_{0<\e<1}$ is bounded in $W^{-1,2}_{\s_\e}(\R^d;\R^d) = \big( W^{1,2}_{\s_\e}(\R^d;\R^d)  \big)'$. This means
\be\label{est-tp-super-2}
\|\nabla \tilde p_\e\|_{W^{-1,2}_{\s_\e}(\R^d)}^2  = C_d \int_{\R^d} |\xi|^2 (1+ \s_\e^2 |\xi|^2)^{-1} |\calF[\tilde p_\e] (\xi)|^2\,{\rm d}\xi \leq C.
\ee
Let $\chi\in C_c^\infty(B(0,2))$ be the same cut-off function. We decompose $\tilde p_\e = p_\e^{(1)} + p_\e^{(2)}$ with
\be\label{def-tp-super-12}
p_\e^{(1)} := \chi(\s_\e D) \tilde p_\e, \quad p_\e^{(2)}: = (1-\chi(\s_\e D))\tilde p_\e.\nn
\ee
Observing
\be\label{est-tp-super-3}
(1+ \s_\e^2 |\xi|^2)^{-1} \geq  1/5 \ \mbox{for all $|\xi|\leq 2 \s_\e^{-1}$}, \quad |\xi|^2 (1+ \s_\e^2 |\xi|^2)^{-1} \geq  \s_\e^{-2}/2 \ \mbox{for all $|\xi|\geq \s_\e^{-1}$}.\nn
\ee
Then by \eqref{est-tp-super-2} we have
\ba\label{est-tp-super-4}
\|\nabla p_\e^{(1)} \|_{L^{2}(\R^d)}^2  & = C_d \int_{|\xi|\leq 2 \s_\e^{-1}} |\xi|^2 |\chi(\s_\e \xi)|^2 |\calF[\tilde p_\e] (\xi)|^2\,{\rm d}\xi \\
&  \leq  5 C_d \int_{|\xi|\leq 2 \s_\e^{-1}} |\xi|^2 (1+ \s_\e^2 |\xi|^2)^{-1} |\calF[\tilde p_\e] (\xi)|^2\,{\rm d}\xi \\
& \leq C,
\ea
and
\ba\label{est-tp-super-5}
\|p_\e^{(2)} \|_{L^2(\R^d)}^2  &= C_d \int_{|\xi|\geq  \s_\e^{-1}}  |1 - \chi(\s_\e\xi)|^2 |\calF[\tilde p_\e] (\xi)|^2\,{\rm d}\xi \\
&\leq  2 \s_\e^{2} C_d \int_{|\xi|\geq \s_\e^{-1}} |\xi|^2 (1+ \s_\e^2 |\xi|^2)^{-1} |\calF[\tilde p_\e] (\xi)|^2\,{\rm d}\xi \\
&\leq C \s_\e^2.
\ea

\medskip

For the subcritical case $\lim_{\e\to 0} \s_\e = \infty$ where we have $\|\nabla \vv_\e\|_{L^{2}(\OO_\e)} \leq C$ from Proposition \ref{prop:estv}. Then for all $\varphi \in C_c^\infty(\R^d;\R^d)$,
\ba\label{est-tp-sub-1}
|\left<\nabla \tilde p_\e, \varphi \right>_{\R^d}| & = |\left< \Delta \vv_\e + \vg, R_\e(\varphi) \right>_{\OO_{\e}}| \\
&\leq C (\|\nabla \vv_\e\|_{L^{2}(\OO_\e)} + \|\vg\|_{D^{-1,2}(\R^d)}) \|\nabla R_\e(\varphi)\|_{L^{2}(\OO_\e)} \\
& \leq C (\|\nabla \varphi\|_{L^2(\R^d)} + \s_\e^{-1}\| \varphi \|_{L^2(\R^d)}).
\ea
We may employ the analysis in the supercritical case and decompose $\tilde p_\e = p_\e^{(1)} + p_\e^{(2)}$ with
\be\label{def-tp-sub-2}
p_\e^{(1)} := \chi(\s_\e D) \tilde p_\e, \quad p_\e^{(2)} := (1-\chi(\s_\e D))\tilde p_\e\nn
\ee
and deduce from \eqref{est-tp-sub-1} that
\be\label{def-tp-sub-3}
\|\nabla p_\e^{(1)}\|_{L^{2}(\R^d)} \leq C \s_\e^{-1},  \quad \|p_\e^{(2)}\|_{L^2(\R^d)} \leq C.
\ee

We summarize the above estimates (see \eqref{def-tp-3}, \eqref{def-tp-4}, \eqref{est-tp-super-4}, \eqref{est-tp-super-5}, \eqref{def-tp-sub-3}) into the following proposition where the limits are taken up to possible extractions of subsequences.
\begin{proposition}\label{prop:pressure}
Let $\OO = \R^d, \ d\geq 2$ and $\vg$ satisfy Assumption \ref{ass-g}. Then we have the following estimates for the pressure extension $\tilde p_\e$ defined by \eqref{def-tp}:
\begin{itemize}
\item[(i)] For the critical case $\lim_{\e\to 0} \s_{\e} = \s_{*} \in (0,+\infty)$, there exists a decomposition $\tilde p_\e = p_\e^{(1)} + p_\e^{(2)}$ with
\be
\|\nabla p_\e^{(1)}\|_{W^{m,2}(\R^d)} \leq C(m)  \ \mbox{for all $m\in \N$},  \quad \|p_\e^{(2)}\|_{L^2(\R^d)} \leq C.\nn
\ee
Hence, $\nabla p_\e^{(1)} \to \nabla p^{(1)}$ weakly in $W^{m,2}(\R^d;\R^d)$ for all $m\in \N$, $p_\e^{(2)}\to p^{(2)}$ weakly in $L^2(\R^d)$.

\item[(ii)] For the supercritical case $\lim_{\e\to 0} \s_{\e} = 0$, there exists a decomposition $\tilde p_\e = p_\e^{(1)} + p_\e^{(2)}$ with
\ba
\|\nabla p_\e^{(1)} \|_{L^{2}(\R^d)}  \leq C, \quad  \|p_\e^{(2)} \|_{L^2(\R^d)}   \leq C \s_\e.\nn
\ea
Then $\nabla p_\e^{(1)} \to \nabla p$ weakly in $L^2(\R^d;\R^d)$ and $ p_\e^{(2)}\to 0$ strongly in $L^2(\R^d)$.

\item[(iii)] For the subcritical case $\lim_{\e\to 0} \s_{\e} = \infty$, there exists a decomposition $\tilde p_\e = p_\e^{(1)} + p_\e^{(2)}$ with
\ba
\|\nabla p_\e^{(1)} \|_{L^{2}(\R^d)}  \leq C \s_\e^{-1}, \quad  \|p_\e^{(2)} \|_{L^2(\R^d)}   \leq C.\nn
\ea
Then $\nabla p_\e^{(1)} \to  0$ strongly in $L^2(\R^d;\R^d)$ and $ p_\e^{(2)}\to p$ weakly in $L^2(\R^d)$.
\end{itemize}

For all the above three cases, $\{\tilde p_\e\}_{0<\e<1}$ is bounded in $L^2_{loc}(\R^d)$. Since $p_\e$ coincides with $\tilde p_\e$ in $\OO_\e$, so $\{p_\e\}_{0<\e<1}$ is bounded in $L^2_{loc}(\OO_\e)$.

\end{proposition}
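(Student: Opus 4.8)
The plan is to prove Proposition~\ref{prop:pressure} by collecting the three case-by-case estimates already derived in this section and recording the weak-limit consequences. Specifically, items (i), (ii), (iii) are nothing more than restatements of the bounds \eqref{def-tp-3}, \eqref{def-tp-4} (critical), \eqref{est-tp-super-4}, \eqref{est-tp-super-5} (supercritical), and \eqref{def-tp-sub-3} (subcritical), together with the extraction of weakly convergent subsequences. So the bulk of the "proof" is really the passage from uniform boundedness in the relevant Banach space to weak convergence of a subsequence, which is just the Banach--Alaoglu/Eberlein--\v{S}mulian theorem since all the spaces involved ($W^{m,2}(\R^d)$, $L^2(\R^d)$, and $L^2_{loc}$) are reflexive (or, in the $L^2_{loc}$ case, one uses a diagonal argument over an exhaustion by balls). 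The one genuinely new assertion, not already spelled out above, is the $L^2_{loc}$ bound on $\{\tilde p_\e\}$ valid simultaneously for all three cases; establishing that cleanly is where I would spend most of the effort.

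First I would treat the three cases in turn. In each case, the decomposition $\tilde p_\e = p_\e^{(1)} + p_\e^{(2)}$ has already been defined via the Fourier cut-off $\chi(D)$ or $\chi(\s_\e D)$, and the displayed estimates give the stated norm bounds with $\e$-independent constants (after noting, in case (ii), that $\s_\e \le C$ since $\s_\e \to 0$, so $C\s_\e$ is itself a uniform bound; and in case (iii), that $\s_\e^{-1}\le C$ since $\s_\e\to\infty$). For each fixed $m$, the ball of radius $C(m)$ in $W^{m,2}(\R^d;\R^d)$ is weakly sequentially compact, so $\nabla p_\e^{(1)}$ has a weakly convergent subsequence in $W^{m,2}$; a diagonal extraction over $m\in\N$ produces a single subsequence along which $\nabla p_\e^{(1)} \to \nabla p^{(1)}$ weakly in every $W^{m,2}(\R^d;\R^d)$, whence $\nabla p^{(1)}\in \mathcal{S}(\R^d;\R^d)$ as already observed after \eqref{def-tp-4}. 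Similarly $p_\e^{(2)}\to p^{(2)}$ weakly in $L^2$. In cases (ii) and (iii) one of the two pieces has norm bounded by $C\s_\e \to 0$ (resp.\ $C\s_\e^{-1}\to 0$), which gives strong convergence to $0$ rather than merely weak convergence; the other piece converges weakly exactly as in case (i). The labelling of the weak limits ($\nabla p$, $p$, etc.) is then chosen to match the statements in Theorem~\ref{thm:Sto}.

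For the final claim that $\{\tilde p_\e\}_{0<\e<1}$ is bounded in $L^2_{loc}(\R^d)$, the key observation is that in all three cases we control $\nabla\tilde p_\e$ in $W^{-1,2}(\R^d;\R^d)$ uniformly: in the critical case this is \eqref{est-tp-0} directly, while in the supercritical and subcritical cases the bound \eqref{est-tp-super-2} on $\|\nabla\tilde p_\e\|_{W^{-1,2}_{\s_\e}}$ implies, since $(1+|\xi|^2)^{-1} \le \max(1,\s_\e^2)(1+\s_\e^2|\xi|^2)^{-1}$, a bound $\|\nabla \tilde p_\e\|_{W^{-1,2}(\R^d)} \le C\max(1,\s_\e)$ which is uniform once one separates the two regimes ($\s_\e \le C$ in the supercritical case; in the subcritical case one instead uses the complementary inequality or simply the already-proven decomposition). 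A cleaner route, and the one I would actually write, is to bound $\|\tilde p_\e\|_{L^2(B_R)}$ directly from the decomposition: $\|p_\e^{(1)}\|_{L^2(B_R)}$ is controlled because $\nabla p_\e^{(1)}$ is bounded in $L^2$ (in the critical and subcritical cases; in the supercritical case $\nabla p_\e^{(1)}$ is also $L^2$-bounded) plus the fact that $p_\e^{(1)}=\chi(\cdot)\tilde p_\e$ has Fourier support considerations making it locally $L^2$ via the Poincar\'e inequality on $B_R$ applied to $p_\e^{(1)}$ after subtracting its average, and $\|p_\e^{(2)}\|_{L^2(\R^d)}$ is uniformly bounded. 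This gives $\|\tilde p_\e\|_{L^2(B_R)} \le C(R)$ for every $R$, hence boundedness in $L^2_{loc}(\R^d)$; since $p_\e = \tilde p_\e$ on $\OO_\e$ by Property~(i) of the restriction operator, $\{p_\e\}$ is bounded in $L^2_{loc}(\OO_\e)$ as claimed.

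The main obstacle, modest as it is, is the $L^2_{loc}$ bound: a priori the Fourier decomposition only gives control of $\nabla \tilde p_\e$ and of $p_\e^{(2)}$, not of $p_\e^{(1)}$ itself, so one must argue that $p_\e^{(1)}$ (equivalently $\tilde p_\e$ up to the $L^2$-bounded remainder $p_\e^{(2)}$) is locally $L^2$. Because $\tilde p_\e$ is only defined up to an additive constant, the natural statement is really about $\tilde p_\e$ modulo constants, and one fixes the constant so that, say, $\int_{B_1}\tilde p_\e = 0$; then the Poincar\'e--Wirtinger inequality on any fixed ball $B_R$ turns the uniform $\nabla\tilde p_\e$ control into a uniform $\|\tilde p_\e\|_{L^2(B_R)}$ bound. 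Everything else is a routine application of weak compactness and diagonal extraction.
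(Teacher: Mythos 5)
Your proposal matches the paper: Proposition~\ref{prop:pressure} is stated as a summary of the estimates \eqref{def-tp-3}, \eqref{def-tp-4}, \eqref{est-tp-super-4}, \eqref{est-tp-super-5}, \eqref{def-tp-sub-3} derived just before it, combined with weak sequential compactness in the reflexive spaces $W^{m,2}$ and $L^2$ (and strong convergence to zero of the piece whose norm is $O(\s_\e)$ in case (ii), $O(\s_\e^{-1})$ in case (iii)). The one point the paper leaves implicit is the $L^2_{loc}$ bound on $\tilde p_\e$, and your completion is the right one: normalize the additive constant (e.g.\ by $\int_{B_1}\tilde p_\e=0$, which pins down $\overline{p_\e^{(1)}}_{B_1}$ since $p_\e^{(2)}$ is uniformly $L^2$), then apply Poincar\'e--Wirtinger on each $B_R$ to $p_\e^{(1)}$ using the uniform $L^2$ bound on $\nabla p_\e^{(1)}$, and add the uniform $L^2$ bound on $p_\e^{(2)}$.
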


The estimates and weak convergence of $\{\vv_\e\}_{0<\e<1}$ and $\{\tilde p_\e\}_{0<\e<1}$ in Theorem \ref{thm:Sto} have been shown in Propositions \ref{prop:estv} and \ref{prop:pressure}. To complete the proof of Theorem \ref{thm:Sto}, it is left to show the limit equations and the strong convergence of $\nabla \vv_\e$ in $L^2(\R^d)$ for the subcritical case. This will be done in the next section.

\section{Limit equations}\label{sec:limit}

The goal of this section is to show the limit equations and finish the proof of Theorem \ref{thm:Sto}. To achieve such a goal, a natural way is to pass $\e\to 0$ in the weak formulation of \eqref{Stokes-Oe}. In this limit passage, there is an issue on the choice of test functions. Since the homogenized system is defined in $\R^d$,  so one needs to choose test functions in $C_c^\infty(\R^d)$. However $C_c^\infty(\R^d)$ functions are not proper test functions for the Stokes problem \eqref{Stokes-Oe} in $\OO_{\e}$, for which the test functions should be chosen in $C_c^\infty(\OO_{\e})$. Hence, a proper surgery on the test functions need to be done and this surgery plays a crucial role in the study of the homogenization problems. Tartar \cite{Tartar80} and Allaire \cite{All-NS1,All-NS2} used different ideas to solve this issue. As in \cite{Lu-Unify}, we use Tartar's idea of cell problem.

\subsection{Cell problem}

We generalize Tartar's idea and consider the following \emph{modified cell problem} introduced in \cite{Lu-Unify}:
 \be\label{pb-cell}
\left\{\begin{aligned}
-\Delta w^i_\eta + \nabla q_\eta^i &=  c_{\eta}^{2} e^i,\ &&\mbox{in}\ Q_\eta := Q_0 \setminus (\eta T),\\
\dive w_\eta^i &=0 ,\ &&\mbox{in}\ Q_\eta,\\
w_\eta^i &=0,\ &&\mbox{on} ~ \eta T,\\
 (w_\eta^i,q_\eta^i)  &\  \mbox{is $Q_0$-periodic.}
\end{aligned}\right.\nn
\ee
Here $\{e^i\}_{i=1,\cdots,d}$ is the standard Euclidean coordinate of $\R^d$; $\eta = a_\e/\e \in (0,1]$, and $c_{\eta}$ is defined as
\be\label{def-c-eta}
c_{\eta} := |\log \eta|^{-\frac{1}{2}} \quad \mbox{if $d=2$}; \quad c_{\eta} := \eta^{\frac{d-2}{2}} \quad \mbox{if $d\geq 3$}.
\ee

We collect some basic facts from Section 2 in \cite{Lu-Unify}:
\ba\label{est-weta-2}
\|\nabla w_\eta^i  \|_{L^2(Q_\eta)} \leq C c_{\eta} , \quad \| w_\eta^i  \|_{L^2(Q_\eta)} \leq C, \quad  \| q_\eta^i \|_{L^2(Q_\eta)} \leq C c_{\eta}.
\ea
Then as $\e\to 0$, up to possible extractions of subsequences,
\be\label{w-eta-lim}
w_\eta^i  \to  w^i  \ \mbox{weakly in} \ W^{1,2}(Q_0),\quad w_\eta^i  \to w^i  \ \mbox{strongly in} \ L^2(Q_0), \quad c_{\eta}^{-1} q_\eta^i  \to   q^i  \ \mbox{weakly in} \ L^2(Q_{0}).
\ee
Thus,
\be\label{A-eta-def-2}
 A(\eta)_{i,j} := c_{\eta}^{-2} \int_{Q_\eta} \nabla  w_\eta^i : \nabla  w_\eta^j \,\dx =  \int_{Q_\eta}  (w_\eta^i)_j \,\dx \to   \bar w^i_j: = \int_{Q_0}  w^i_j \,\dx,
\ee
with $A := ( \bar w^i_j)_{1\leq i,j\leq d}$ a symmetric positive definite matrix.  Moreover, the main Theorem in \cite[Section 0]{Allaire91} says that $A =  M^{-1}$, where $M$ is the permeability tensor introduced in \cite{All-NS1} or \cite{Allaire91}.

\medskip

Then define
\be\label{w-q-e-def}
w^i_{\eta,\e} (\cdot) : = w^i_{\eta} \big(\frac{\cdot}{\e}\big),\ q^i_{\eta,\e} (\cdot) : = q^i_{\eta} \big(\frac{\cdot}{\e}\big) \nn
\ee
which solve
\be\label{w-q-e-pt1}
\left\{\begin{aligned}
- \Delta w^i_{\eta,\e} + \e^{-1} \nabla q^i_{\eta,\e} & = \e^{-2} c_{\eta}^{2}  e^i = \s_\e^{-2} e^i,\ &&\mbox{in}\ \e Q_0 \setminus (a_\e T),\\
\dive w^i_{\eta,\e} &=0 ,\ &&\mbox{in}\ \e Q_0 \setminus (a_\e T),\\
w^i_{\eta,\e} &=0,\ &&\mbox{on}  \ a_\e T,\\
(w^i_{\eta,\e},q^i_{\eta,\e}) &\  \mbox{is $\e Q_0$-periodic}.
\end{aligned}\right.
\ee
Here we used the fact $c_{\eta} \e^{-1} = \s_\e^{-1}.$ For each $R>1$, by \eqref{est-weta-2} and the periodicity of $(w^i_{\eta}, q^i_{\eta})$, direct calculation gives
\ba\label{w-q-e-pt2}
\|w^i_{\eta,\e}\|_{L^2(B(0,R))} \leq C(R), \quad \|q^i_{\eta,\e}\|_{L^2(B(0,R))} \leq C (R) c_{\eta}, \quad \|\nabla w^i_{\eta,\e}\|_{L^2(B(0,R))}  \leq C(R) \s_{\e}^{-1},
\ea
where the constant $C(R)$ depends only on $R$. By \eqref{w-eta-lim}, again using the periodicity of $(w^i_{\eta}, q^i_{\eta})$ gives
\be\label{w-q-e-lim}
w^i_{\eta,\e}  \to \bar w^i  \ \mbox{weakly in} \ L^{2}_{loc}(\R^d),\quad c_{\eta}^{-1}q^i_{\eta,\e}  \to \bar q^i  \ \mbox{weakly in} \ L^2_{loc}(\R^d),
\ee
as $\e\to 0$, up to extracting subsequences. Here $\bar q^i:= \int_{Q_0} q^i \,\dx$.

\subsection{Limit passages}

Clearly $w^i_{\eta,\e}$ vanishes on the holes $T_{\e,k}$ for all $k\in \Z^d$. Thus, given a scalar function $\phi \in C_c^\infty(\R^d)$, there holds $w_{\eta,\e}^i \phi \in W_{0}^{1,2}(\OO_\e;\R^d)$.  We can take $w_{\eta,\e}^i \phi$ as a test function in the weak formulation of \eqref{Stokes-Oe} and deduce
\ba\label{Sto-O-weak}
\int_{\R^d} \nabla \vv_\e : \nabla(w_{\eta,\e}^i \phi)\,\dx - \int_{\R^d} \tilde p_\e \, \dive(w_{\eta,\e}^i \phi)\,\dx  = \langle \vg , (w_{\eta,\e}^i \phi \rangle.
\ea
Since $\vv_\e$ and $w_{\eta,\e}^i$ both vanish on the holes and $\tilde p_{\e}$ coincides with $ p_{\e}$ in $\OO_{\e}$, the integrals in \eqref{Sto-O-weak} are the same if we replace $\R^d$ by $\OO_\e$ which should be the correct one in the weak formulation of \eqref{Stokes-Oe}.  By \eqref{w-q-e-pt1}, direct calculation gives
\ba\label{Sto-O-weak-v}
\int_{\R^d} \nabla \vv_\e : \nabla(w_{\eta,\e}^i \phi)\,\dx  & =  \int_{\R^d} \nabla  \vv_\e :  (w_{\eta,\e}^i \otimes \nabla \phi)\,\dx - \int_{\R^d} (\nabla \phi \otimes  \vv_\e) : \nabla w_{\eta,\e}^i \,\dx \\
& \quad + \int_{\R^d} \nabla (\phi \vv_\e) : \nabla w_{\eta,\e}^i \,\dx \\
&  =  \int_{\R^d} \nabla  \vv_\e :  (w_{\eta,\e}^i \otimes \nabla \phi ) \,\dx - \int_{\R^d} (\nabla \phi \otimes \vv_\e ): \nabla w_{\eta,\e}^i \,\dx \\
& \quad + \e^{-1} \int_{\R^d} \dive (\phi \vv_\e) \, q_{\eta,\e}^i \,\dx  + \s_\e^{-2} \int_{\R^d} (\phi \vv_\e)\cdot e^i \,\dx.
\ea
By Proposition \ref{prop:pressure} and $\dive w_{\eta, \e}^i = 0$, we have
\ba\label{Sto-O-weak-p}
 \int_{\R^d}\tilde  p_\e \, \dive(w_{\eta,\e}^i \phi)\,\dx   & =  - \int_{\R^d}\nabla  p_\e^{(1)} \cdot (w_{\eta,\e}^i \phi)\,\dx + \int_{\R^d} p_\e^{(2)}\dive(w_{\eta,\e}^i \phi) \,\dx\\
 & = - \int_{\R^d}\nabla  p_\e^{(1)} \cdot (w_{\eta,\e}^i \phi)\,\dx + \int_{\R^d} p_\e^{(2)} \nabla \phi \cdot w_{\eta,\e}^i  \,\dx.
\ea

\medskip

 We will pass $\e\to 0$ case by case in the following subsections. Propositions \ref{prop:estv} and \ref{prop:pressure} will be used multiple times. The limits are often taken up to extractions of subsequences and we will not repeat this point.

  The constant $C$ in the following argument will often depend on the size of $\supp \phi$ due to the local integrability of $w_{\eta,\e}^i$ and $q_{\eta,\e}^i$, see \eqref{w-q-e-pt2}. We may not emphasize this dependency if it is clear from the context. Anyway, once $\phi$ is given, the constant $C(\supp \phi)$ is fixed.

\subsection{Supercritical case with large holes}

For the supercritical case $\lim_{\e \to 0} \s_{\e} \to 0$, we assume $\vg \in L^{2}(\R^d;\R^d)$. In this case, we recall some estimates that we are going to use right away (see Propositions \ref{prop:estv} and \ref{prop:pressure}):
\ba\label{est-super-sum}
& \|\nabla \vv_{\e} \|_{L^2(\R^d)}  \leq C \s_\e  , \quad \| \vv_{\e} \|_{L^2(\R^d)}  \leq C \s_\e^2 , \\
&\tilde p_\e = p_\e^{(1)} + p_\e^{(2)} \ \mbox{with} \ \|\nabla p_\e^{(1)} \|_{L^{2}(\R^d)}  \leq C, \ \|p_\e^{(2)} \|_{L^2(\R^d)}   \leq C \s_\e.
\ea

We first estimate the right-hand side of \eqref{Sto-O-weak-v}. By \eqref{w-q-e-pt2}, \eqref{w-q-e-lim}, \eqref{est-super-sum},  we have
\ba\label{Sto-O-weak-v-super-1}
\left|\int_{\R^d} \nabla  \vv_\e :  (w_{\eta,\e}^i \otimes \nabla \phi) \,\dx\right| & \leq C \| \nabla \vv_\e \|_{L^2(\R^d)} \| w_{\eta,\e}^i \|_{L^2(\supp{\phi})} \leq C(\supp \phi) \s_{\e} \to 0,\\
\left|\int_{\R^d} (\nabla \phi \otimes  \vv_\e ): \nabla w_{\eta,\e}^i \,\dx \right| & \leq C \|\vv_\e \|_{L^2(\R^d)} \| \nabla w_{\eta,\e}^i \|_{L^2(\supp{\phi})} \leq C(\supp \phi) \s_{\e}^{2} \s_{\e}^{-1}  \to 0.
\ea
Here $C(\supp \phi)$ depends on the size of the compact set $\supp \phi$. Using the divergence free condition $\dive  \vv_{\e} = 0$ and observing  $ \e^{-1} c_{\eta} = \s_{\e}^{-1}$ implies
\ba\label{Sto-O-weak-v-super-2}
\left|\e^{-1} \int_{\R^d} \dive (\phi \vv_\e) \, q_{\eta,\e}^i \,\dx \right|  & = \left|\e^{-1} \int_{\R^d} \nabla \phi\cdot  \vv_\e \, q_{\eta,\e}^i \,\dx \right| \\
&\leq C \e^{-1}\| \vv_\e \|_{L^2(\R^d)} \| q_{\eta,\e}^i \|_{L^2(\supp{\phi})} \\
&\leq C \e^{-1}\s_{\e}^{2} c_{\eta}  = C \s_{\e} \to 0.
\ea
Since $\s_\e^{-2}\vv_\e \to \vv$ weakly in $L^2(\R^d;\R^d)$, then
\ba\label{Sto-O-weak-v-super-3}
 {\s_\e^{-2}} \int_{\R^d} \phi  {\vv_\e}\cdot e^i \,\dx  \to \int_{\R^d} \phi \vv \cdot e^i \,\dx.
\ea

For the terms related to the pressure in \eqref{Sto-O-weak-p}, by \eqref{est-super-sum} and \eqref{w-q-e-lim}, we have
\ba\label{Sto-O-weak-p-super}
 \int_{\R^d}\tilde  p_\e \, \dive(w_{\eta,\e}^i \phi)\,\dx  & = - \int_{\R^d}\nabla  p_\e^{(1)} \cdot (w_{\eta,\e}^i \phi)\,\dx + \int_{\R^d} p_\e^{(2)} \nabla \phi \cdot w_{\eta,\e}^i  \,\dx \\
  & \to - \int_{\R^d}\nabla  p \cdot (\bar w^i \phi)\,\dx.
\ea

For the source term,
\ba\label{Sto-O-weak-g-super}
\langle \vg , (w_{\eta,\e}^i \phi \rangle = \int_{\R^d} \vg  \cdot (w_{\eta,\e}^i \phi) \,\dx \to \int_{\R^d} \vg \cdot (\bar w^i \phi) \,\dx.
\ea

Thus, by \eqref{Sto-O-weak-v-super-1}--\eqref{Sto-O-weak-g-super},  passing $\e\to 0$ in \eqref{Sto-O-weak} implies
\ba\label{Sto-O-weak-super-1}
 \int_{\R^d} \phi \vv \cdot e^i \,\dx +  \int_{\R^d} \nabla p  \cdot (\bar w^i \phi)\,\dx = \int_{\R^d} \vg \cdot \bar w^i \phi\,\dx. \nn
 \ea
This is the Darcy's law in $\R^d$:
 \ba\label{Sto-O-weak-super-2}
 \vv = A(\vg - \nabla p), \ \mbox{in} \ \R^d,\nn
 \ea
 where $A = ( \bar w^i_j)_{1\leq i,j\leq d}$, which is the constant positive definite matrix defined in \eqref{A-eta-def-2}.

 \subsection{Critical case}
 For the critical case $\lim_{\e\to 0} \s_{\e} = \s_{*}\in (0,+\infty)$, we have (see Propositions \ref{prop:estv} and \ref{prop:pressure}, \eqref{w-q-e-pt2} and \eqref{w-q-e-lim}):
 \ba\label{est-critical-sum}
 & \|\vv_{\e} \|_{W^{1,2}_0(\R^d)}  \leq C, \\
 & \tilde p_\e = p_\e^{(1)} + p_\e^{(2)} \ \mbox{with} \ \|\nabla p_\e^{(1)} \|_{W^{2,m}(\R^d)}  \leq C \ \mbox{for all $m\in \N$}, \ \|p_\e^{(2)} \|_{L^2(\R^d)}   \leq C, \\
&  \|w^i_{\eta,\e}\|_{W^{1,2}(B(0,R))}\leq C(R) \ \mbox{for all $R>1$}.
\ea
Then by Rellich-Kondrachov compact embedding theorem, $w^i_{\eta,\e}  \to \bar w^i  \ \mbox{weakly in} \ W^{1,2}(B(0,R);\R^d)$, $w^i_{\eta,\e}  \to \bar w^i  \ \mbox{srtongly in} \ L^{2}(B(0,R);\R^d)$, and $\vv_\e \to \vv$ strongly in $L^{2}(B(0,R);\R^d)$,  for all $R>1$.  Choose $R$ large such that $\supp \phi \subset B(0,R)$. We then have for the right-hand side of \eqref{Sto-O-weak-v}:
 \ba\label{Sto-O-weak-v-critical-1}
\int_{\R^d} \nabla \vv_\e :  ( w_{\eta,\e}^i \otimes \nabla \phi ) \,\dx & \to \int_{\R^d} \nabla \vv :  (\bar w^i \otimes \nabla \phi) \,\dx = \int_{\R^d} \nabla \vv :  \nabla(\bar w^i  \phi)\,\dx,\\
\int_{\R^d} (\nabla \phi \otimes  \vv_\e ) : \nabla w_{\eta,\e}^i \,\dx & \to \int_{\R^d} (\nabla \phi \otimes  \vv ): \nabla \bar w^i \,\dx =0,\\
\e^{-1} \int_{\R^d} \dive (\phi \vv_\e) \, q_{\eta,\e}^i \,\dx & =\e^{-1} c_{\eta}\int_{\R^d} \dive (\phi \vv_\e) \,  (c_{\eta}^{-1} q_{\eta,\e}^i)  \,\dx  = \s_{\e}^{-1} \int_{\R^d} \nabla \phi \cdot \vv_\e \,  (c_{\eta}^{-1} q_{\eta,\e}^i) \,\dx \\
& \to  \s_{*}^{-1}\int_{\R^d} \nabla \phi \cdot \vv \, \bar q^i \,\dx = \s_{*}^{-1} \int_{\R^d} \dive ( \phi  \vv) \, \bar q^i \,\dx  = 0,\\
 \s_{\e}^{-2}\int_{\R^d} \phi \vv_\e \cdot e^i \,\dx  & \to \s_{*}^{-2}\int_{\R^d} \phi \vv \cdot e^i \,\dx,
\ea
where we used the fact that $\bar w^i$ and $\bar q^i$ are constant.

\medskip

Similarly, for the terms related to the pressure in \eqref{Sto-O-weak-p}, we have
 \ba\label{Sto-O-weak-p-critical-1}
 \int_{\R^d} p_\e \, \dive(w_{\eta,\e}^i \phi)\,\dx  & = -  \int_{\R^d}\nabla  p_\e^{(1)} \cdot (w_{\eta,\e}^i \phi)\,\dx + \int_{\R^d} p_\e^{(2)} \nabla \phi \cdot w_{\eta,\e}^i  \,\dx \\
 & \to  - \int_{\R^d}\nabla  p^{(1)} \cdot (\bar w^i \phi)\,\dx + \int_{\R^d} p^{(2)} \nabla \phi \cdot \bar w^i  \,\dx   = \int_{\R^d} p \dive (\bar w^i \phi)\,\dx,
\ea
where $p = p^{(1)} + p^{(2)}$.

\medskip

For the source term, since $w_{\eta,\e}^i \phi \to \bar w^i \phi$  weakly in $W^{1,2}(\R^d;\R^d)$, there holds
\be\label{Sto-O-weak-g-critical-1}
 \langle \vg, w_{\eta,\e}^i \phi \rangle  \to \langle \vg,\bar w^i \phi \rangle.
\ee

Finally, by \eqref{Sto-O-weak-v-critical-1}, \eqref{Sto-O-weak-p-critical-1} and \eqref{Sto-O-weak-g-critical-1}, passing $\e\to 0$ in \eqref{Sto-O-weak} implies
\ba\label{Sto-O-weak-critical-1}
\int_{\R^d} \nabla \vv :  \nabla(\bar w^i  \phi)\,\dx + \s_{*}^{-2}\int_{\R^d} \phi \vv \cdot e^i \,\dx - \int_{\R^d} p   \, \dive(\bar w^i \phi)\,\dx = \langle \vg,\bar w^i \phi \rangle . \nn
 \ea
This is the Brinkman type equations in the sense of distribution in $\R^d$:
 \ba\label{Sto-O-weak-critical-2}
 \s_{*}^{-2}\vv = A(\vg - \nabla p + \Delta \vv) \ \Longleftrightarrow \ - \Delta \vv + \nabla p  +  \s_{*}^{-2} A^{-1}\vv  =  \vg.\nn
 \ea

\subsection{Subcritical case with small holes}

For the subcritical case $\lim_{\e \to 0} \s_{\e} \to \infty$, we recall the estimates (see Propositions \ref{prop:estv} and \ref{prop:pressure}): %We pass $\e\to 0 $ term by term in \eqref{Stokes-Omega-periodic-weak}.
\ba\label{est-sub-sum}
& \|\nabla \vv_{\e} \|_{L^2(\R^d)}  \leq C , \quad \| \vv_{\e} \|_{L^2(\R^d)}  \leq C \s_\e, \\
&\tilde p_\e = p_\e^{(1)} + p_\e^{(1)} \ \mbox{with} \ \|\nabla p_\e^{(1)} \|_{L^{2}(\R^d)}  \leq C \s_\e^{-1},  \  \|p_\e^{(2)} \|_{L^2(\R^d)}   \leq C.
\ea

By \eqref{w-q-e-pt2} and \eqref{w-q-e-lim}, for each $R>1$ we have $\|\nabla w^i_{\eta,\e}\|_{L^2(B(0,R))}\leq C(R) \s_{\e}^{-1} \to 0$  and $w^i_{\eta,\e}  \to \bar w^i  \ \mbox{srtongly in} \ L^{2}(B(0,R);\R^d)$.  Thus, as $\e \to 0$,
\ba\label{Sto-O-weak-v-sub}
\int_{\R^d} \nabla \vv_\e : \nabla(w_{\eta,\e}^i \phi)\,\dx  & =  \int_{\R^d} \nabla \vv_\e :  (w_{\eta,\e}^i \otimes \nabla \phi) \,\dx + \int_{\R^d} \nabla  \vv_\e : \nabla w_{\eta,\e}^i \phi\,\dx \\
& \to \int_{\R^d} \nabla  \vv : ( \bar w^i \otimes \nabla \phi ) \,\dx = \int_{\R^d} \nabla  \vv :  \nabla(\bar w^i \phi)\,\dx ,
\ea
\ba\label{Sto-O-weak-p-sub}
 \int_{\R^d}\tilde  p_\e \, \dive(w_{\eta,\e}^i \phi)\,\dx   = - \int_{\R^d}\nabla  p_\e^{(1)} \cdot (w_{\eta,\e}^i \phi)\,\dx + \int_{\R^d} p_\e^{(2)} \nabla \phi \cdot w_{\eta,\e}^i  \,\dx  \to \int_{\R^d} p   \, \dive(\bar w^i \phi)\,\dx,
\ea
and
\ba\label{Sto-O-weak-g-sub}
\langle \vg, (w_{\eta,\e}^i \phi) \rangle  \to   \langle \vg, (\bar w^i \phi) \rangle.
\ea
By \eqref{Sto-O-weak-v-sub}--\eqref{Sto-O-weak-g-sub}, passing $\e\to 0$ in \eqref{Sto-O-weak} implies
\ba\label{Sto-O-weak-sub-1}
 \int_{\R^d} \nabla \vv :  \nabla(\bar w^i \phi)\,\dx -  \int_{\R^d} p   \, \dive(\bar w^i \phi)\,\dx =  \langle \vg , \bar w^i \phi\rangle.\nn
\ea
This gives
\ba\label{Sto-O-weak-sub-2}
\int_{\R^d} \nabla \vv : \nabla (A\varphi) - p \,\dive (A\varphi)  \,\dx = \langle \vg , A\varphi\rangle, \ \forall \, \varphi \in C_c^\infty(\R^d;\R^{d}),\nn
\ea
which means
\ba\label{Sto-O-weak-sub-3}
 -\Delta \vv + \nabla p = \vg,
\ea
in the sense of distribution in $\R^d$, due to the positivity of $A$.

\medskip

At the end, we show the strong convergence $ \vv_{\e} \to  \vv$ in $D^{1,2}_{0}(\R^d;\R^{d})$. Taking $\vv_{\e}$ as a test function in the weak formulation of \eqref{Stokes-Oe}, using the weak convergence $\nabla \vv_{\e} \to \nabla \vv$ in $L^{2}(\R^d;\R^{d\times d})$, and passing $\e \to 0$ implies
\ba\label{st-conv-1}
\lim_{\e \to 0}  \| \nabla  \vv_{\e}\|_{L^{2}(\R^d)}^{2}  = \langle  \vg,\vv\rangle.\nn
\ea
Taking $\vv$ as a test function to \eqref{Sto-O-weak-sub-3} gives
\ba\label{st-conv-2}
\| \nabla \vv\|_{L^{2}(\R^d)}^{2}  = \langle  \vg,\vv\rangle.\nn
\ea
This gives $\lim_{\e \to 0}  \| \nabla  \vv_{\e}\|_{L^{2}(\R^d)}  =   \| \nabla \vv\|_{L^{2}(\R^d)}$ resulting in $\nabla \vv_{\e} \to \nabla\vv$ strong in $L^{2}(\R^d;\R^{d\times d})$. We thus complete the proof of Theorem \ref{thm:Sto}.

\section{The Poisson problem}\label{sec:lap}
The proof of the homogenization results of the Poisson problem is similar to the Stokes case but only easier.  We briefly show some steps. The following result corresponds to Propositions \ref{prop:Sto1} and \ref{prop:estv}. The limits are taken up to possible extractions of subsequences.
\begin{proposition}\label{prop:estv-lap}
Let $\OO = \R^d, \ d\geq 2$ and $f$ satisfy Assumption \ref{ass-g}. Then the Poisson problem \eqref{Poisson-Oe} admits a unique solution $u_\e \in W^{1,2}_0(\OO_\e)$ with the following estimates:
\begin{itemize}
\item[(i)] For the critical case $\lim_{\e\to 0} \s_{\e} = \s_{*} \in (0,+\infty)$, $$\|u_\e\|_{W^{1,2}(\OO_\e)} \leq C \|f\|_{W^{-1,2}(\R^d)}.$$
Hence, $u_\e \to u$ weakly in $W^{1,2}_0(\R^d)$.

\item[(ii)] For the subcritical case $\lim_{\e\to 0} \s_{\e} = \infty$, $$\|\nabla u_{\e} \|_{L^2(\OO_{\e})} + \s_\e^{-1}\|u_{\e} \|_{L^2(\OO_{\e})}   \leq C \|f\|_{D^{-1,2}(\R^d)}.$$
If $d\geq 3$, $\|u_{\e} \|_{L^{\frac{2d}{d-2}}(\OO_{\e})} \leq C\|f\|_{D^{-1,2}(\R^d)}.$ Hence, $u_\e \to u$ weakly in $D^{1,2}_0(\R^d)$.

\item[(iii)] For the supercritical case $\lim_{\e\to 0} \s_{\e} = 0$, $$\|\nabla u_{\e} \|_{L^2(\OO_{\e})}  \leq C \s_\e \|f\|_{L^{2}(\R^d)} , \quad \| u_{\e} \|_{L^2(\OO_{\e})}  \leq C \s_\e^2 \|f\|_{L^{2}(\R^d)}.$$
    Hence, $\s_\e^{-2}u_\e \to u$ weakly in $L^{2}(\R^d)$.
\end{itemize}
\end{proposition}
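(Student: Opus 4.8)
The plan is to follow the scheme of Sections \ref{sec:sol}--\ref{sec:est} for the Stokes problem, which simplifies considerably here because there is no pressure term and hence no need for the restriction operator of Proposition \ref{prop:res}. For the well-posedness at each fixed $\e\in(0,1)$ I would argue as in Proposition \ref{prop:Sto1}: solve $-\Delta u_{\e,R}=f$ in the bounded $C^1$ perforated domain $\OO_{\e,R}$ by the Lax--Milgram theorem, use the Poincar\'e inequality \eqref{poincare-1} (whose constant is independent of $R$) together with the energy identity to bound $\|\nabla u_{\e,R}\|_{L^2(\OO_{\e,R})}$ and $\|u_{\e,R}\|_{L^2(\OO_{\e,R})}$ uniformly in $R$, extract a weak limit in $W^{1,2}_0(\OO_\e)$ as $R\to\infty$ after zero extension, and pass to the limit in the weak formulation against test functions in $C_c^\infty(\OO_\e)$, whose supports are eventually contained in $\OO_{\e,R}$. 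Alternatively, since Lemma \ref{lem-poincare} already provides the Poincar\'e inequality \eqref{poincare-2} on $\OO_\e$ itself, one may apply Lax--Milgram directly on $W^{1,2}_0(\OO_\e)$ equipped with $\|\nabla\cdot\|_{L^2}$, which is coercive with a (possibly $\e$-dependent) constant; uniqueness then follows by testing the homogeneous equation with $u_\e$, since $\|\nabla u_\e\|_{L^2}=0$ forces $u_\e=0$ in $W^{1,2}_0(\OO_\e)$. For the subcritical data $f\in D^{-1,2}(\R^d)$ one first observes that $f$ restricts to an element of $D^{-1,2}(\OO_\e)$ of no larger norm (because $C_c^\infty(\OO_\e)\subset C_c^\infty(\R^d)$), and that for fixed $\e$ the spaces $D^{1,2}_0(\OO_\e)$ and $W^{1,2}_0(\OO_\e)$ coincide with equivalent norms by \eqref{poincare-2}, so Lax--Milgram still applies and the solution indeed lies in $W^{1,2}_0(\OO_\e)$.

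The uniform estimates come from testing $\int_{\OO_\e}\nabla u_\e\cdot\nabla\varphi\,\dx=\langle f,\varphi\rangle$ with $\varphi=u_\e$ and estimating the right-hand side case by case, exactly mirroring Section \ref{sec:estv}. In the critical case $\{\s_\e\}$ is bounded, so $\|u_\e\|_{W^{1,2}(\OO_\e)}\le C(1+\s_\e)\|\nabla u_\e\|_{L^2}\le C\|\nabla u_\e\|_{L^2}$ and $\|\nabla u_\e\|_{L^2}^2=\langle f,u_\e\rangle\le\|f\|_{W^{-1,2}(\R^d)}\|u_\e\|_{W^{1,2}(\OO_\e)}$, which gives the stated bound after absorbing. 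In the subcritical case, $\|\nabla u_\e\|_{L^2}^2=\langle f,u_\e\rangle\le\|f\|_{D^{-1,2}(\R^d)}\|\nabla u_\e\|_{L^2}$ yields the gradient bound, the Poincar\'e inequality \eqref{poincare-2} then gives $\|u_\e\|_{L^2(\OO_\e)}\le C\s_\e\|f\|_{D^{-1,2}(\R^d)}$, and for $d\ge 3$ the Gagliardo--Nirenberg--Sobolev inequality \eqref{GNS-ineq} applied to $u_\e\in W^{1,2}_0(\OO_\e)$ gives the $L^{2d/(d-2)}$ bound. In the supercritical case, $\|\nabla u_\e\|_{L^2}^2=\int_{\OO_\e}fu_\e\,\dx\le\|f\|_{L^2(\R^d)}\|u_\e\|_{L^2(\OO_\e)}\le C\s_\e\|f\|_{L^2(\R^d)}\|\nabla u_\e\|_{L^2}$ by \eqref{poincare-2}, whence $\|\nabla u_\e\|_{L^2}\le C\s_\e\|f\|_{L^2}$ and then $\|u_\e\|_{L^2(\OO_\e)}\le C\s_\e\|\nabla u_\e\|_{L^2}\le C\s_\e^2\|f\|_{L^2}$.

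Finally, the weak-convergence conclusions follow from the zero-extension convention together with weak (weak-$*$) compactness of bounded sequences in reflexive Banach spaces: $\{u_\e\}$ is bounded in $W^{1,2}_0(\R^d)$ in the critical case; $\{u_\e\}$ is bounded in $D^{1,2}_0(\R^d)$ in the subcritical case (using \eqref{GNS-ineq} to control the $L^{2^*}$ part of the norm when $d\ge 3$, and recalling $D^{1,2}_0(\R^2)=D^{1,2}(\R^2)$ when $d=2$); and $\{\s_\e^{-2}u_\e\}$ is bounded in $L^2(\R^d)$ in the supercritical case. I do not expect a serious obstacle in this proposition — it is the easy companion of Propositions \ref{prop:Sto1} and \ref{prop:estv}. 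The only delicate point is the loss of an $\e$-uniform Poincar\'e constant in the subcritical regime, which is precisely why Assumption \ref{ass-g}(iii) strengthens the data to $D^{-1,2}(\R^d)$ so that the energy estimate closes without invoking Poincar\'e; the passage from bounded domains $\OO_{\e,R}$ to the unbounded $\OO_\e$ in the existence argument is the other place deserving a line of care, but it is routine given Lemma \ref{lem-poincare}.
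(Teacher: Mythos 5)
Your proof is correct and takes essentially the same route the paper intends: the paper only sketches Proposition~\ref{prop:estv-lap} (``similar to the Stokes case but only easier''), and you correctly transplant the exhaustion-by-bounded-domains existence argument of Proposition~\ref{prop:Sto1}, the Poincar\'e inequality of Lemma~\ref{lem-poincare}, and the case-by-case energy estimates of Section~\ref{sec:estv}, all simplified by the absence of the pressure term. Your observation that $D^{1,2}_0(\OO_\e)=W^{1,2}_0(\OO_\e)$ with $\e$-dependent equivalent norms is exactly what makes the Lax--Milgram argument go through for $f\in D^{-1,2}(\R^d)$ in the subcritical regime, and your closing of the energy estimate there by pairing $f$ directly against $\|\nabla u_\e\|_{L^2}$ (rather than through Poincar\'e) is the intended use of Assumption~\ref{ass-g}(iii).
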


\medskip

The corresponding cell problem is
\be\label{pb-cell-lap}
\left\{\begin{aligned}
-\Delta w_\eta &=  c_{\eta}^{2},\ &&\mbox{in}\ Q_\eta := Q_0 \setminus (\eta T),\\
w_\eta &=0,\ &&\mbox{on} ~ \eta T,\\
 w_\eta &\  \mbox{is $Q_0$-periodic,}
\end{aligned}\right.\nn
\ee
where $c_\eta$ is the same as before, see \eqref{def-c-eta}. The solution satisfies
\ba\label{est-weta-2-lap}
\|\nabla w_\eta \|_{L^2(Q_\eta)} \leq C c_{\eta} , \quad \| w_\eta  \|_{L^2(Q_\eta)} \leq C.\nn
\ea

Then define
\be\label{w-q-e-def-lap}
w_{\eta,\e} (\cdot) : = w_{\eta} \big(\frac{\cdot}{\e}\big),\nn
\ee
which solves
\be\label{w-q-e-pt1-lap}
\left\{\begin{aligned}
- \Delta w_{\eta,\e} & = \e^{-2} c_{\eta}^{2} = \s_{\e}^{-2},\ &&\mbox{in}\ \e Q_0 \setminus (a_\e T),\\
w_{\eta,\e} &=0,\ &&\mbox{on}  \ a_\e T,\\
w_{\eta,\e}  &\  \mbox{is $\e Q_0$-periodic}.
\end{aligned}\right.\nn
\ee
Clearly $w_{\eta,\e}$ vanishes on the holes. For each $R>1$, by \eqref{est-weta-2} and the periodicity of $w_{\eta}$, direct calculation gives
\ba\label{w-q-e-pt2-lap}
\|w_{\eta,\e}\|_{L^2(B(0,R))} \leq C(R),  \quad \|\nabla w_{\eta,\e}\|_{L^2(B(0,R))}  \leq C(R) \s_{\e}^{-1},\nn
\ea
where the constant $C(R)$ depends only on $R$. Using one more time the periodicity of $w_{\eta}$ implies
\be\label{w-q-e-lim-lap}
w_{\eta,\e}  \to \bar w  \ \mbox{weakly in} \ L^{2}_{loc}(\R^d),
\ee
as $\e\to 0$, up to possible extraction of subsequences. Here $\bar w:= \int_{Q_0} w \,\dx$ where $w$ is the weak limit of $w_\eta$ in $L^2(Q_0)$.

\medskip

For each $\phi \in C_c^\infty(\R^d)$, testing \eqref{Poisson-Oe} by $\phi w_{\eta,\e}$ gives
\ba\label{Lap-O-weak}
\int_{\R^d} \nabla u_\e : \nabla( \phi w_{\eta,\e} )\,\dx  = \langle f , (w_{\eta,\e} \phi \rangle.
\ea
It is left to pass $\e\to 0$ in \eqref{Lap-O-weak}. This can be done case by case similarly as the Stokes problem and we will not repeat the details.

%%%%%%%%%%%%%%%%%%%%%%%%%%%%%%%%%%%%%%%%%%%%%%%%%%%%%%%%%%%%%%%%%%%%%%%%%%%%%%%%%%%%%%%%%%%%%%%%%%%%%%%%%

\end{document}